\newtheorem{theorem}{Theorem}[section]
\theoremstyle{definition}
\newtheorem{definition}[theorem]{Definition}
\newtheorem{corollary}[theorem]{Corollary}
\newtheorem{assumption}[theorem]{Assumption}
\newtheorem{remark}{Remark}[section]
\numberwithin{equation}{section}
\NewDocumentCommand{\dgal}{sO{}m}{%
  \IfBooleanTF{#1}
    {\dgalext{#3}}
    {\dgalx[#2]{#3}}%
}
\NewDocumentCommand{\dgalext}{m}{%
  \sbox0{%
    \mathsurround=0pt 
    $\left\{\vphantom{#1}\right.\kern-\nulldelimiterspace$%
  }%
  \sbox2{\{}%
  \ifdim\ht0=\ht2
    \{\kern-.45\wd2 \{#1\}\kern-.45\wd2 \}%
  \else
    \left\{\kern-.5\wd0\left\{#1\right\}\kern-.5\wd0\right\}%
  \fi
}
\NewDocumentCommand{\dgalx}{om}{%
  \sbox0{\mathsurround=0pt$#1\{$}%
  \sbox2{\{}%
  \ifdim\ht0=\ht2
    \{\kern-.45\wd2 \{#2\}\kern-.45\wd2 \}%
  \else
    \mathopen{#1\{\kern-.5\wd0 #1\{}
    #2
    \mathclose{#1\}\kern-.5\wd0 #1\}}
  \fi
}
\begin{document}

\begin{frontmatter}

\title{Gradient recovery for elliptic interface problem: III. Nitsche's method}

\author[mainaddress]{Hailong Guo}
\ead{hlguo@math.ucsb.edu}

\author[mainaddress]{Xu Yang\corref{mycorrespondingauthor}}
\cortext[mycorrespondingauthor]{Corresponding author}
\ead{xuyang@math.ucsb.edu}

\address[mainaddress]{Department of Mathematics, University of California Santa Barbara, CA, 93106}

\begin{abstract}
This is the third paper on the study of gradient recovery for elliptic interface problem. In our previous works [H. Guo and X. Yang, 2016, arXiv:1607.05898 and {\it J. Comput. Phys.}, 338 (2017), 606--619], we developed {gradient recovery methods} for elliptic interface problem based on body-fitted meshes and immersed finite element methods. Despite the efficiency and accuracy that these methods bring to recover the gradient, there are still some cases in unfitted meshes where skinny triangles appear in the generated local body-fitted  triangulation  that destroy the accuracy of recovered gradient near the interface. In this paper, we propose a gradient recovery technique based on Nitsche's method for elliptic interface problem, which avoids the loss of accuracy of gradient near the interface caused by skinny triangles. We analyze the supercloseness between the gradient of  the numerical solution by the Nitsche's method and the gradient of interpolation of the exact solution, which leads to the superconvergence of the proposed gradient recovery method. We also present several numerical examples to validate the theoretical results.
\end{abstract}

\begin{keyword}
elliptic interface problem\sep gradient recovery\sep superconvergence\sep Nitsche's method\sep polynomial preserving recovery

\smallskip
\MSC[2010]   35R05\sep 65N30\sep 65N15
\end{keyword}

\end{frontmatter}


\section{Introduction}
\label{sec:int}
Elliptic interface problems arise in many applications such as fluid dynamics and materials science,
where the background consists of rather different materials on the subdomains separated by smooth curves called interface.
The numerical challenge of interface problems comes from the fact that the solution, in general, has low global regularity due to the discontinuity of parameter (e.g. dielectric constant) at the interface. Standard finite element methods have been studied for elliptic interface problems by aligning the triangulation along the interface (body-fitted meshes), and are proven to achieve optimal convergence rates in both  $L_2$ and energy norms  \cite{Bramble1996,Babuska1970,ChenZou1998,Xu1982}. However, when  the interface leads to subdomains of complex geometry, it is non-trivial and time-consuming to generate body-fitted meshes.

To overcome the difficulty of mesh generation in standard finite element method, tremendous effort has been input to develop numerical methods using unfitted (Cartesian) meshes. \cite{Peskin1977} is the first to propose the immersed boundary method (IBM) to simulate blood
flow using Cartesian meshes. The idea of IBM is to use a Dirac $\delta$-function to model the discontinuity and discretize it to distribute a singular source to nearest grid points \cite{Peskin1977, Peskin2002}. But IBM only achieves the first-order accuracy. To improve the accuracy, Leveque and Li characterized the discontinuity as jump conditions and proposed the immersed interface method (IIM) \cite{LevequeLi1994}. IIM constructs special finite difference schemes to incorporate the jump conditions near the interface.
High order unfitted finite difference methods including matched interface and boundary (MIB) method are also proposed in \cite{Zhou2006, ZhouWei2006}.
We refer to \cite{LiIto2006} for a review on IIM and other unfitted finite difference methods.

In the meantime, unfitted numerical methods using finite element formulation are also developed for elliptic interface problems. The extended finite element method \cite{Belytschko1999,Moes2001,Fries2010} enriches the standard continuous  finite element
space by adding some special basis functions to capture the discontinuity.  The immersed finite element methods  \cite{Li1998, LiLinWu2003,LiLinLin2004}  modify the basis functions to satisfy the homogeneous jump conditions  on interface elements.  For the nonconforming immersed finite element method(IFEM) in \cite{LiLinWu2003}, the numerical solution is continuous inside  each element but
 can be discontinuous on the boundary of each element. Recently, there are also improved versions of IFEM such as the Petrov-Galerkin IFEM \cite{HouLiu2005,HouWuZhang2004, HouWangWang2010}, symmetric and consistent IFEM \cite{JiChenLi2014},
and partially penalized IFEM \cite{LinLinZhang2015}.

The Nitsche's method \cite{Roland2009, Burman2014, Burman2015, Burman2012, Hansbo2005,Hansbo2002, Hansbo2004, Hansbo2003, Hansbo2014}, also called the cut finite element method, is firstly proposed by Hansbo and Hansbo in \cite{Hansbo2002} to solve elliptic interface problems using unfitted meshes.  It is further extended to deal with elastic problems with  strong  and weak discontinuities \cite{Hansbo2004}.
The study of the Nitsche's method for Stokes interface problems can be found in \cite{Hansbo2014}.
The key idea of the Nitsche's method
is to construct an approximate solution on each fictitious domain and  use Nitsche's technique \cite{Nitsche1971} to patch them together.
A similar idea was used to develop the fictitious domain method \cite{Burman2012, Burman2014}.
The robust forms of the unfitted  Nitsche's method were given in \cite{Dolbow2012, Wadbro2013}.   The recent development of the cut finite element method is referred
to the review paper \cite{Burman2015}.

For elliptic interface problems, computation of gradient plays an important role in many practical problems as discussed in \cite{Lizhilin2017}, which demands numerical methods of high order accuracy. For standard elliptic problems, it is well known that the gradient recovery techniques  \cite{ZZ1992a, ZTZ2013,ZZ1992b, ZhangNaga2005, GuoZhang2015, AinsworthOden2000, NagaZhang2005, NagaZhang2004,ChenXu2007, XuZhang2004, BankXu2003b}
can reconstruct a highly accurate approximate gradient from the primarily computed data with reasonable cost.  But for elliptic interface problems, only a few works have been done on the gradient recovery and associated superconvergence theory. For example in \cite{Chen2014},  a supercloseness result between the gradient of the linear finite element solution and the gradient of the linear interpolation is proved for a two-dimensional interface problem with a body-fitted mesh.  For IFEM, Chou et al. introduced two special interpolation formulae to recover flux with high order accuracy for the one-dimensional linear and quadratic IFEM
\cite{Chou2012,Chou2015}. Moreover, Li and his collaborators recently proposed an augmented immersed interface method \cite{Lizhilin2017} and a new finite element method \cite{Qin2017} to accurately compute the gradient of the solution to elliptic interface problems. In our recent work \cite{GuoYang2016},
 we proposed an improved polynomial preserving recovery for elliptic interface problems based on a body-fitted mesh and proved the superconvergence on both mildly unstructured meshes and adaptively refined meshes. Later in the two-dimensional case \cite{GuoYang2017}, we  proposed gradient recovery methods based on symmetric and consistent IFEM \cite{JiChenLi2014}
and Petrov-Galerkin  IFEM \cite{HouLiu2005,HouSongWangZhao2013,HouWuZhang2004} and numerically verified its superconvergence. In \cite{GuoYangZhang2017}, we also provided a supercloseness result for the partially penalized IFEMs and proved that the recovered gradient using the gradient recovery method in \cite{GuoYang2017} is superconvergent to the exact gradient.


Despite the efficiency and accuracy that the methods mentioned above bring to recover the gradient, there are still some cases  in unfitted meshes where skinny triangles appear in the generated local body-fitted  triangulation that destroy the accuracy of recovered gradient near the interface. In this paper, we propose an unfitted polynomial preserving recovery (UPPR) based on the Nitsche's method. The key idea is to decompose the domain
 into two overlapping subdomains, named fictitious domains, by the interface and the triangulation proposed in the Nitsche's method.
On each fictitious domain, the standard linear finite element space will be used, and thus the classical polynomial preserving recovery (PPR) can be applied in each fictitious domain. Compared to previous gradient recovery methods \cite{GuoYang2016, GuoYang2017},  the new method does not require generating a local body-fitted mesh  and therefore avoids the drawback caused by skinny triangles. In general, the exact solutions of the interface problems are piecewise smooth on each subdomain.
It implies that the extension of the exact solution on  each subdomain to the whole domain is smooth, based on which,  the   recovered gradient using the interpolation of the exact solution is proven to be superconvergent to the exact gradient at rate of $\mathcal{O}(h^2)$, and this is similar to
  the classical PPR for standard elliptic problems.   In addition, we prove  $\mathcal{O}(h^{1.5})$ supercloseness between the gradient given by the Nitsche's method  and the gradient of the interpolation of the exact solution by a sharp argument. This enables us to establish the complete superconvergence theory for the proposed UPPR.

The rest of the paper is organized as follows:
We introduce briefly the elliptic interface problem  and the  Nitsche's method  in Section 2. In Section 3, we analyze the supercloseness  for the Nitsche's method and prove the $\mathcal{O}(h^{1.5})$ supercloseness between the gradient of the finite element solution  and the gradient of the interpolation of the exact solution.
In section 4, we describe the UPPR for the Nitsche's method and establish its superconvergence theory.
 In Section 5, we present several numerical examples to confirm our theoretical results.

\section{Nitsche's method for elliptic interface problem}
In this section, we first introduce the elliptic interface problem and associated notations, and then summarize
the unfitted finite element discretization based on Nitsche's method proposed in \cite{Dolbow2012, Hansbo2002} as a preparation for the unfitted polynomial preserving recovery (UPPR) method introduced later.
\label{sec:pl}
\subsection{Elliptic interface problem}
\label{ssec:interface}

Let $\Omega$ be a bounded polygonal domain with  Lipschitz boundary $\partial \Omega$ in $\mathbb{R}^2$.
A $C^2$-curve $\Gamma$ divides $\Omega$ into two disjoint subdomains $\Omega_1$ and $\Omega_2$ as in Figure \ref{fig:interface}.
We consider the following elliptic interface problem
\begin{subequations}\label{eq:interface}
 \begin{align}
  -\nabla \cdot (\beta(z) \nabla u(z)) &= f(z),  \quad \text{ in } \Omega_1\cup\Omega_2, \label{eq:model}\\
   u & = 0, \quad\quad\,\,  \text{ on } \partial\Omega, \label{eq:bnd}\\
      \llbracket u\rrbracket &=q,  \quad\quad\,\,  \text{ on } \Gamma \label{eq:valuejump}\\
        \llbracket \beta \partial_n u \rrbracket&= g,   \quad\quad\,\,  \text{ on } \Gamma\label{eq:fluxjump}
\end{align}
\end{subequations}
where  $\partial_n u=(\nabla u)\cdot n$ with $n$ being the unit outward normal vector of $\Gamma$   and  the
jump $\llbracket w \rrbracket$ on $\Gamma$  is defined as
\begin{equation}\label{eq:jump}
\llbracket w\rrbracket = w_1 - w_2,
\end{equation}
with $w_i = w|_{\Omega_i}$ being the restriction of $w$ on $\Omega_i$.
The diffusion coefficient $\beta(z) \ge \beta_0$ is a piecewise smooth function, i.e.
\begin{equation}
\beta(z) =
\left\{
\begin{array}{ccc}
    \beta_1(z) &  \text{if } z= (x,y)\in \Omega_1, \\
   \beta_2(z)  &   \text{if } z= (x,y)\in \Omega_2,\\

\end{array}
\right.
\end{equation}
which has a finite jump of function value at the interface $\Gamma$.

\begin{figure}[ht]
    \centering
    \includegraphics[width=0.5\textwidth]{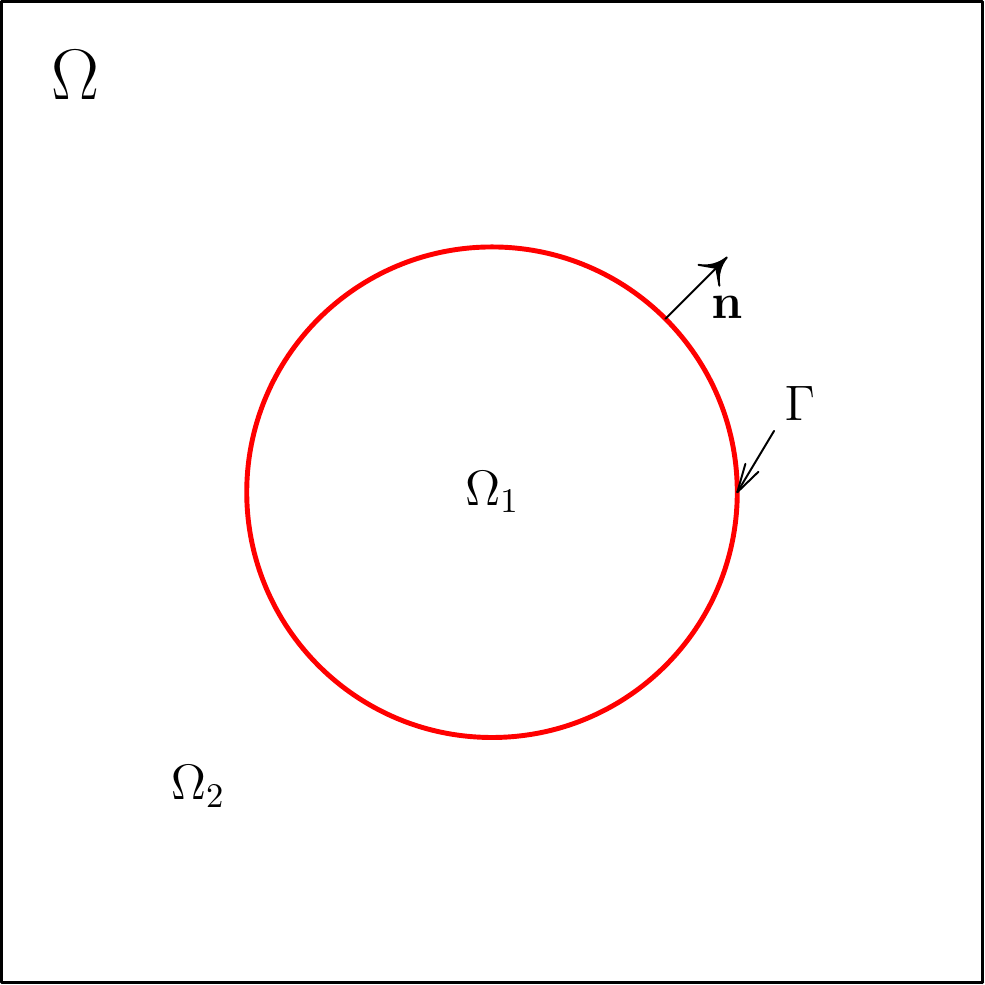}
    \caption{ Typical example of domain $\Omega$ with interface $\Gamma$.}
\label{fig:interface}
\end{figure}

In this paper,  we use the standard
notations for Sobolev spaces and their associated norms as in \cite{BrennerScott2008, Ciarlet2002, Evans2008}.
For any  bounded domain $D\subset \Omega$, the Sobolev space with norm
$\|\cdot\|_{k, p, D} $ and seminorm $|\cdot|_{k, p,D}$ is denoted by $W^{k,p}(D)$.
 When $p = 2$, $W^{k,2}(D)$ is simply  denoted by $H^{k}(D)$
 and the subscript $p$ is omitted in its associate norm and seminorm.  Similar notations are applied to subdomains  of $\Gamma$.
 Let $(\cdot, \cdot)_D$ and $\langle \cdot, \cdot\rangle_{\Gamma}$ denote the standard $L_2$ inner products of $L_2(D)$ and
 $L_2(\Gamma)$, respectively.
For a bounded domain $D= D_1\cup D_2$ with $D_1\cap D_2=\emptyset$, let  $W^{k,p}(D_1\cup D_2)$  be the function space consisting of piecewise Sobolev  functions $w$  such
 that $w|_{D_1}\in W^{k,p}(D_1)$ and $w|_{D_2}\in W^{k,p}(D_2)$, whose  norm  is defined as
 \begin{equation}\label{eq:pnorm}
\|w\|_{k,p, D_1\cup D_2} = \left( \|w\|_{k,p, D_1}^p + \|w\|_{k,p, D_2}^p\right)^{1/p},
\end{equation}
and seminorm is defined as
 \begin{equation}\label{eq:psnorm}
|w|_{k,p, D_1\cup D_2} = \left( |w|_{k,p, D_1}^p + |w|_{k,p, D_2}^p\right)^{1/p}.
\end{equation}

In this paper, we denote $C$ as a generic positive constant which can be different at different occurrences.
In addition,  it is  independent of mesh size and the location of the interface.

\subsection{Nitsche's method}
\label{ssec:nit}
Let $\mathcal{T}_h$ be a  triangulation of $\Omega$ independent of the location of  the interface $\Gamma$.
For any element $T\in \mathcal{T}_h$, let $h_T$ be the diameter of $T$ and $\rho_T$ be the diameter of the circle
inscribed in $T$.  In addition, we make the following assumptions on the triangulation.

\begin{assumption}\label{ass:reg}
The triangulation $\mathcal{T}_h$ is shape regular in the sense that there is a constant $\sigma$ such that
\begin{equation}
\frac{h_T}{\rho_T}\le \sigma,
\end{equation}
for any $ T\in \mathcal{T}_h$.
\end{assumption}
\begin{assumption}\label{ass:inter}
The interface $\Gamma$ intersects each  interface element boundary $\partial T$ exactly twice, and each open edge at most once.
\end{assumption}


To define the finite element space,  denote the set of all elements that intersect the interface $\Gamma$ by
\begin{equation}
\mathcal{T}_{\Gamma,h} = \left\{ T\in \mathcal{T}_h:  \Gamma\cap \overline{T} \neq \emptyset \right\},
\end{equation}
and denote the union of all such type elements by
\begin{equation}
\Omega_{\Gamma,h} = \bigcup\limits_{T\in \mathcal{T}_{\Gamma,h}}T.
\end{equation}
Denote the set of all elements covering subdomain $\Omega_i$ to be
\begin{equation}
\mathcal{T}_{i,h} = \left\{ T\in \mathcal{T}_h:   \overline{\Omega_i}\cap \overline{T} \neq \emptyset\right\}, \quad i = 1, 2;
\end{equation}
and let
\begin{equation}
\Omega_{i,h} = \bigcup\limits_{T\in \mathcal{T}_{i,h}}T, \quad \omega_{i,h} = \bigcup\limits_{T\in \mathcal{T}_{i,h}\setminus \mathcal{T}_{\Gamma,h}}T,\quad i = 1, 2.
\end{equation}
Figure \ref{fig:mesh} gives an illustration of $\Omega_{i,h}$ and $\omega_{i,h}$.  We remark that
$\Omega_{1,h}$ and $\Omega_{2,h}$ overlap on $\Omega_{\Gamma,h}$, which is shown as the shaded part in Figures \ref{fig:meshone} and \ref{fig:meshtwo}.

\begin{figure}
   \centering
   \subcaptionbox{\label{fig:meshwhole}}
  {\includegraphics[width=0.32\textwidth]{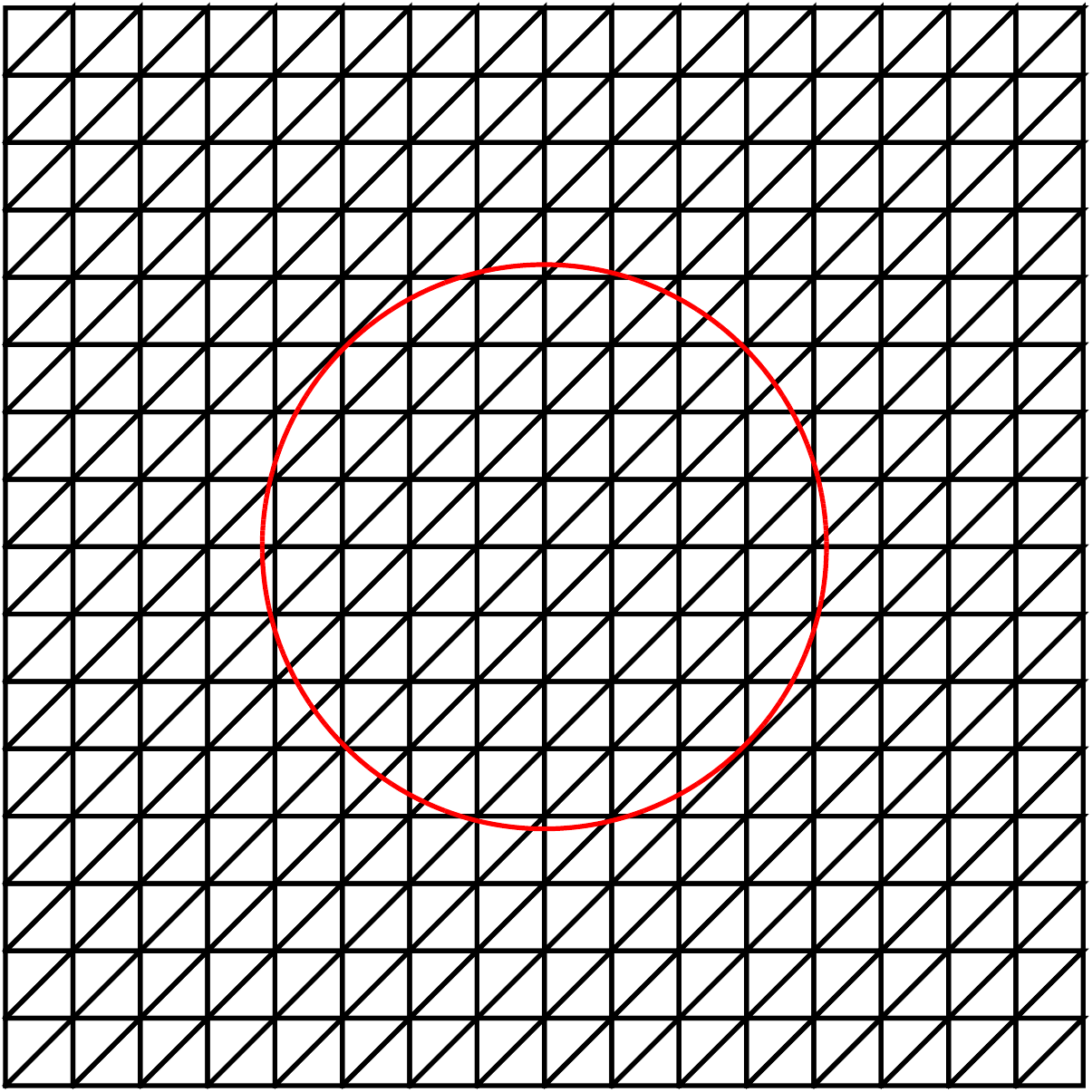}}
  \subcaptionbox{\label{fig:meshone}}
   {\includegraphics[width=0.32\textwidth]{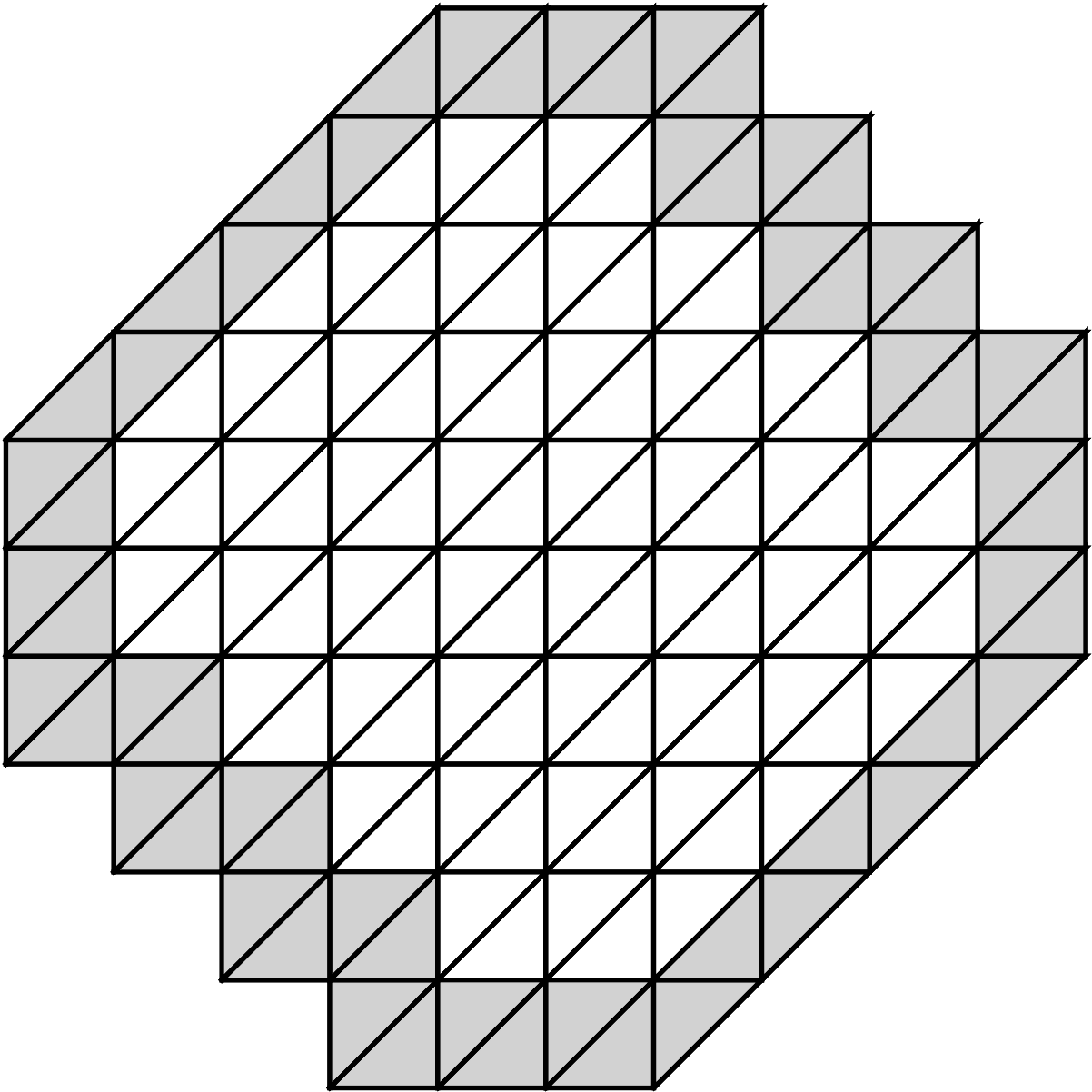}}
  \subcaptionbox{\label{fig:meshtwo}}
  {\includegraphics[width=0.32\textwidth]{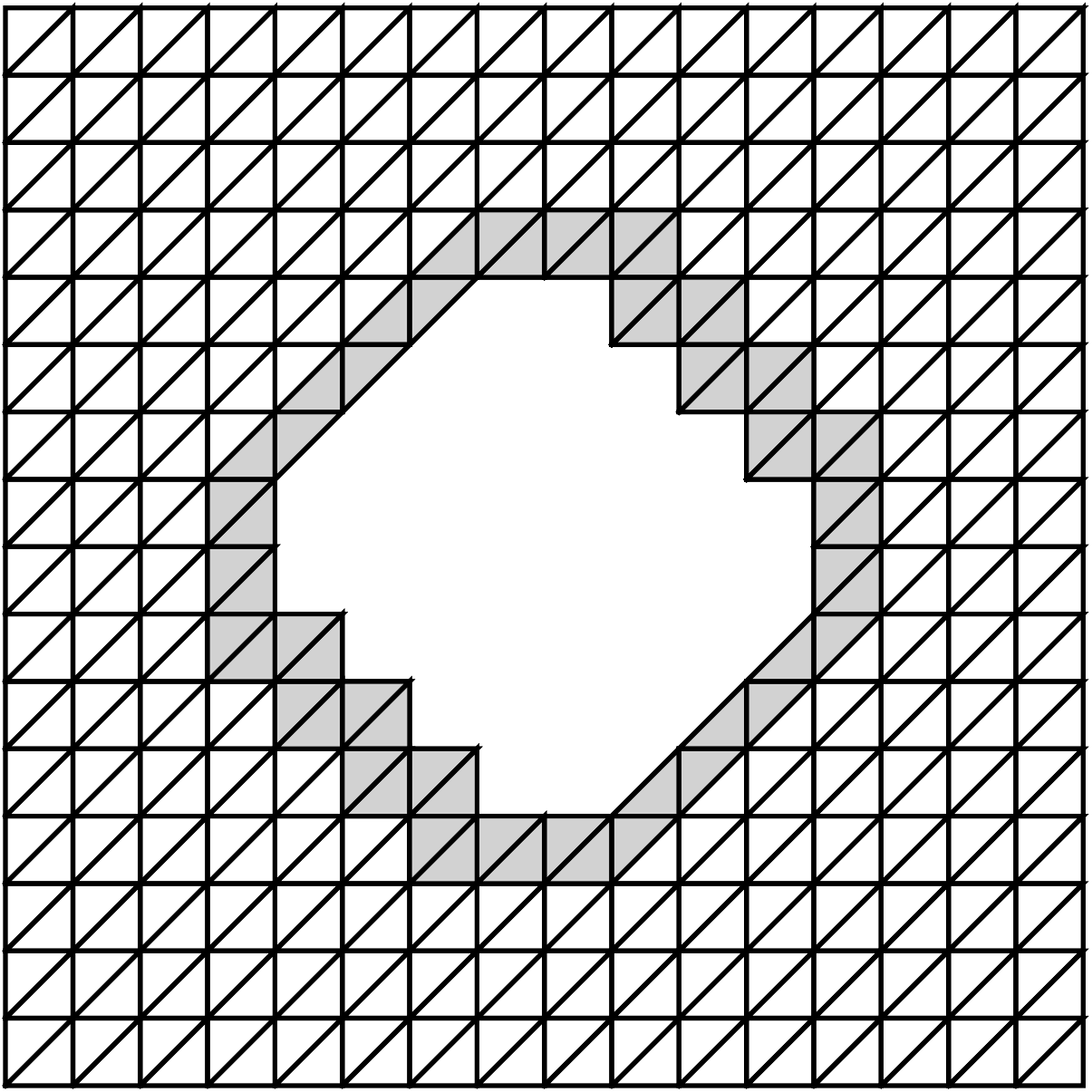}}
   \caption{Triangulat $\mathcal{T}_h$ on the square domain $\Gamma$ with circular interface $\Gamma$. (a): Triangulation $\mathcal{T}_h$; (b): Triangulation $\mathcal{T}_{1,h}$
   and $\omega_{1,h}$ (non-shaded triangles);
   (c): Triangulation $\mathcal{T}_{2,h}$ and $\omega_{2,h}$ (non-shaded triangles).}\label{fig:mesh}
\end{figure}

Let $V_{i,h}$ be the standard continuous linear finite element space on $\Omega_{i,h}$, i.e.
\begin{equation}
V_{i,h} = \left\{ v \in C^0(\Omega_{i,h}):  v|_{T} \in \mathbb{P}_1(T) \text{ for any } T \in \mathcal{T}_{i,h}\right\},\quad i = 1, 2,
\end{equation}
where $\mathbb{P}_1(T)$ is the space of polynomials with degree $\leq 1$ on $T$.  Then,  we define
the finite element space $V_h$ as
\begin{equation}
V_h = \left\{ v_h = (v_{1,h}, v_{2,h}): v_{i,h}\in V_{i,h}, \, i = 1,2 \right\},
\end{equation}
and $V_{h,0}$ as
\begin{equation}
V_{h,0} = \left\{ v_h\in V_h: v_h|_{\partial\Omega} = 0\right\}.
\end{equation}
Note that a function in $V_h$ is a vector-valued function from $\mathbb{R}^2\mapsto\mathbb{R}^2$, which has a zero component in $\omega_{1,h}\bigcup\omega_{2,h}$ but in general two non-zero components in $\mathcal{T}_{\Gamma,h}$.  It means that one will have two sets
of basis functions for any element $T$ in $\mathcal{T}_{\Gamma,h}$: one for $V_{1,h}$ and the other for $V_{2,h}$.

For any $T$ in $\mathcal{T}_{\Gamma,h}$,  denote $T_i = T \cap \Omega_i$ as the part of $T$ in $\Omega_i$, with
$|T_i|$ being the measure of $T_i$ in $\mathbb{R}^2$.
Denote $\Gamma_T = \Gamma\cap T$ as the part of $\Gamma$ in $T$, with $|\Gamma_T|$ being the measure of $\Gamma_T$ in $\mathbb{R}^1$.
To increase the robustness of the Nitsche's method, we  introduce two weights as used in \cite{Dolbow2012}
\begin{equation}
\kappa_1|_T = \frac{\beta_2|T_1|}{\beta_2|T_1|+\beta_1|T_2|}, \quad \kappa_2|_T = \frac{\beta_1|T_2|}{\beta_2|T_1|+\beta_1|T_2|},
\end{equation}
which satisfies that $ \kappa_1 + \kappa_2=1$.
Then,  we define the weighted  averaging of a function $v_h$ in $V_h$ on the interface $\Gamma$ as
\begin{equation}
\dgal{w} = \kappa_1v_{1,h} +\kappa_2v_{2,h}, \quad  \dgal{w} ^{\ast}= \kappa_2v_{1,h} +\kappa_1v_{2,h}.
\end{equation}

Using the notations introduced above, the Nitsche's method \cite{Burman2015,Hansbo2002,Dolbow2012} for the elliptic interface problem \eqref{eq:interface}  is  to find $u_h \in V_{h,0}$ such that
\begin{equation}\label{eq:var}
a_h(u_h, v_h) = L_h(v_h), \quad \forall v_h \in V_{h,0};
\end{equation}
where the bilinear form $a_h$ is defined as
\begin{equation}\label{eq:bilinear}
\begin{split}
 a_h(u_h,v_h) = &\sum\limits_{i=1}^2\left(\beta\nabla u_{i,h}, \nabla v_{i,h}\right)_{\Omega_i}
- \left\langle  \llbracket u_h \rrbracket ,\dgal{\beta \partial_nv_{h}}\right\rangle_{\Gamma} \\
&-\left\langle  \llbracket v_h \rrbracket ,\dgal{\beta \partial_nu_{h}}\right\rangle_{\Gamma}
+ h^{-1}\left\langle \gamma \llbracket u_h \rrbracket,  \llbracket v_h \rrbracket\right\rangle_{\Gamma},
\end{split}
\end{equation}
and the linear functional $L_h$ is defined as
\begin{equation}\label{eq:functional}
L_h(v_h) = \sum\limits_{i=1}^2(f, \nabla v_{i,h})_{\Omega_i} - \langle q,\dgal{\beta\partial_n v_h}\rangle_{\Gamma}
+ \left\langle \gamma q,  \llbracket v_h \rrbracket , \right\rangle_{\Gamma}
+\left\langle g, \dgal{w}^{\ast}\right\rangle_{\Gamma},
\end{equation}
with the stability parameter
\begin{equation}
\gamma|_T = \frac{2h_T|\Gamma_T|}{|T_1|/\beta_1+|T_2|/\beta_2}.
\end{equation}

In \cite{Dolbow2012},  the discrete  variational form is shown to be consistent  as the following theorem:
\begin{theorem}\label{thm:consist}
Let $u$ be the solution of the interface problem \eqref{eq:interface}. Then we have
\begin{equation}\label{eq:nitsche}
a_h(\tilde{u}, v_h) = L(v_h), \quad \forall v_h \in V_{h,0},
\end{equation}
where $\tilde{u}=(u_1, u_2)$ with the short-hand notation $u_i=u|_{\Omega_i}$ as used in \eqref{eq:jump}.
\end{theorem}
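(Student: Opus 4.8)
The plan is to derive \eqref{eq:nitsche} directly from the strong form \eqref{eq:interface}, testing against an arbitrary $v_h=(v_{1,h},v_{2,h})\in V_{h,0}$ and matching the boundary contributions on $\Gamma$ against the terms defining $a_h$ in \eqref{eq:bilinear} and $L_h$ in \eqref{eq:functional}. Throughout I use the standard piecewise regularity $u_i=u|_{\Omega_i}\in H^2(\Omega_i)$, which guarantees that the one-sided normal fluxes $\beta_i\partial_n u_i$ admit $L_2(\Gamma)$ traces, so that Green's formula is valid on each subdomain.

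First I would multiply \eqref{eq:model} by $v_{i,h}$, integrate over $\Omega_i$, and integrate by parts. Because $v_h$ vanishes on $\partial\Omega$, the only surviving boundary terms live on $\Gamma$; keeping track of the opposite normal orientations ($n$ for $\Omega_1$ and $-n$ for $\Omega_2$) and summing the two contributions yields
\[
\sum_{i=1}^2(\beta\nabla u_i,\nabla v_{i,h})_{\Omega_i}-\int_{\Gamma}\llbracket\beta\partial_n u\,v_h\rrbracket\,\mathrm{d}s=\sum_{i=1}^2(f,v_{i,h})_{\Omega_i},
\]
where the interface term is the jump of the product $\llbracket\beta\partial_n u\,v_h\rrbracket=\beta_1\partial_n u_1\,v_{1,h}-\beta_2\partial_n u_2\,v_{2,h}$.

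The crucial step is the weighted product-jump identity
\[
\llbracket ab\rrbracket=\dgal{a}\,\llbracket b\rrbracket+\dgal{b}^{\ast}\,\llbracket a\rrbracket,
\]
valid for any quantities with one-sided traces on $\Gamma$, which follows by expanding the two averages and using $\kappa_1+\kappa_2=1$. Applying it with $a=\beta\partial_n u$ and $b=v_h$ splits the interface term into $\langle\dgal{\beta\partial_n u},\llbracket v_h\rrbracket\rangle_{\Gamma}+\langle\dgal{v_h}^{\ast},\llbracket\beta\partial_n u\rrbracket\rangle_{\Gamma}$, and the flux jump condition \eqref{eq:fluxjump} replaces the second factor by $g$. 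Hence
\[
\sum_{i=1}^2(\beta\nabla u_i,\nabla v_{i,h})_{\Omega_i}-\langle\dgal{\beta\partial_n u},\llbracket v_h\rrbracket\rangle_{\Gamma}=\sum_{i=1}^2(f,v_{i,h})_{\Omega_i}+\langle g,\dgal{v_h}^{\ast}\rangle_{\Gamma}.
\]

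It then remains to assemble $a_h(\tilde u,v_h)$. Substituting the value jump condition \eqref{eq:valuejump}, $\llbracket\tilde u\rrbracket=q$, into the two $\llbracket\tilde u\rrbracket$-occurrences of \eqref{eq:bilinear}, the symmetrizing term $-\langle\llbracket v_h\rrbracket,\dgal{\beta\partial_n u}\rangle_{\Gamma}$ cancels exactly against the flux term produced above, and what survives is precisely $\sum_{i}(f,v_{i,h})_{\Omega_i}+\langle g,\dgal{v_h}^{\ast}\rangle_{\Gamma}-\langle q,\dgal{\beta\partial_n v_h}\rangle_{\Gamma}+h^{-1}\langle\gamma q,\llbracket v_h\rrbracket\rangle_{\Gamma}$, i.e. exactly $L_h(v_h)$ from \eqref{eq:functional}. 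I expect the only genuinely delicate point to be the sign and orientation bookkeeping in Green's formula combined with choosing the correct (starred versus unstarred) average in the product-jump identity: it is exactly this pairing that makes the symmetry term cancel and sends the data $g$ onto the conjugate average $\dgal{\cdot}^{\ast}$, matching \eqref{eq:functional}. Everything else is routine once the piecewise $H^2$ regularity justifying the traces is in place.
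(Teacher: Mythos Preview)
Your derivation is correct and is the standard consistency argument for Nitsche-type formulations: integrate by parts on each subdomain, apply the weighted product--jump identity $\llbracket ab\rrbracket=\dgal{a}\,\llbracket b\rrbracket+\dgal{b}^{\ast}\llbracket a\rrbracket$ to split the interface flux, insert the data \eqref{eq:valuejump}--\eqref{eq:fluxjump}, and match against \eqref{eq:bilinear}--\eqref{eq:functional}. The paper itself does not give a proof of Theorem~\ref{thm:consist}; it simply cites \cite{Dolbow2012} for the result, so there is no in-paper argument to compare against, but your proof is exactly the one found in that literature. (You may also have noticed that the display \eqref{eq:functional} contains evident typos---a stray $\nabla$ in the source term, a missing $h^{-1}$ on the penalty contribution, and $\dgal{w}^{\ast}$ in place of $\dgal{v_h}^{\ast}$; the right-hand side your computation produces is the intended one.)
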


Theorem \ref{thm:consist} implies the following Galerkin orthogonality:
\begin{corollary}\label{cor:orth}
 Let $u$ be the solution of \eqref{eq:interface} and $u_h$ be the solution of the discrete problem \eqref{eq:var}. Then we have
 \begin{equation}\label{eq:orth}
a_h(\tilde{u}-u_h, v_h) = 0, \quad \forall v_h \in V_{h,0},
\end{equation}
with $\tilde{u}=(u_1, u_2)$.
\end{corollary}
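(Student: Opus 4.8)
The plan is to derive \eqref{eq:orth} as an immediate consequence of the consistency identity in Theorem~\ref{thm:consist} together with the defining equation \eqref{eq:var} of the discrete solution, exploiting that the bilinear form $a_h$ is linear in its first argument. Concretely, I would first record that since $u$ solves the interface problem \eqref{eq:interface}, Theorem~\ref{thm:consist} applies to $\tilde{u}=(u_1,u_2)$ and yields
\begin{equation*}
a_h(\tilde{u}, v_h) = L_h(v_h), \quad \forall v_h \in V_{h,0},
\end{equation*}
where I identify the right-hand side $L(v_h)$ of \eqref{eq:nitsche} with the linear functional $L_h$ defined in \eqref{eq:functional} (the two coincide; the consistency statement is precisely that the exact solution reproduces the discrete load). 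Next, by the definition \eqref{eq:var} of the Nitsche approximation $u_h\in V_{h,0}$, we have $a_h(u_h,v_h)=L_h(v_h)$ for every $v_h\in V_{h,0}$.

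The key step is then to subtract these two identities over the same test space $V_{h,0}$. Since both equations hold for all $v_h\in V_{h,0}$ and share the identical right-hand side $L_h(v_h)$, the difference gives
\begin{equation*}
a_h(\tilde{u}, v_h) - a_h(u_h, v_h) = 0, \quad \forall v_h \in V_{h,0}.
\end{equation*}
Finally I would invoke the linearity of $a_h(\cdot, v_h)$ in its first slot—visible directly from the bilinear form \eqref{eq:bilinear}, each of whose four terms (the piecewise energy term, the two consistency/symmetry coupling terms, and the penalty term) is linear in the first entry—to combine $a_h(\tilde{u},v_h)-a_h(u_h,v_h)=a_h(\tilde{u}-u_h,v_h)$, which is exactly \eqref{eq:orth}.

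There is essentially no analytic obstacle here; the result is a formal consequence of consistency plus Galerkin discretization, and the argument is purely algebraic. The only points deserving care are bookkeeping ones: confirming that $\tilde{u}-u_h$ is a legitimate argument for $a_h$ (it is, since $\tilde u$ lies in the broken Sobolev space on which $a_h$ is defined and $u_h\in V_{h,0}\subset V_h$, so each coupling term on $\Gamma$ and each volume term is well defined), and verifying that the functional appearing in Theorem~\ref{thm:consist} is genuinely the same $L_h$ used in \eqref{eq:var} so that the right-hand sides cancel exactly rather than merely up to a consistency error. Once these are checked, the corollary follows in a single line.
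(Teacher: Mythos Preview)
Your argument is correct and is exactly the approach the paper intends: the corollary is stated immediately after Theorem~\ref{thm:consist} as a direct consequence, and your subtraction of \eqref{eq:nitsche} from \eqref{eq:var} using linearity of $a_h(\cdot,v_h)$ is precisely that implication made explicit. No further work is needed.
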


To analyze the stability  of the bilinear form $a_h(\cdot, \cdot)$, we introduce the following mesh-dependent norm \cite{Burman2015,Hansbo2002}
\begin{equation}\label{def:md_norm}
|||v|||_h^2 = \|\nabla u\|_{0, \Omega_1\cup \Omega_2}^2 + \sum\limits_{T\in \mathcal{T}_{\Gamma,h}}h_T\|\dgal{\partial_n v}\|_{0,\Gamma_T}^2
+\sum\limits_{T\in \mathcal{T}_{\Gamma,h}}h_T^{-1}\|\llbracket v\rrbracket\|_{0,\Gamma_T}^2.
\end{equation}

In \cite{Hansbo2002},  it is shown that the bilinear form $a_h(\cdot, \cdot)$  is coercive  with respect to the above mesh-dependent norm in the following sense
\begin{theorem}\label{thm:coer}
There is a constant C such that
  \begin{equation}
C|||v_h|||_h^2 \le a_h(v_h,v_h), \quad \forall v_h \in V_h.
\end{equation}
\end{theorem}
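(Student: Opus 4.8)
The plan is to test the bilinear form on the diagonal and split it into its three natural pieces. Writing $A=\sum_{i=1}^2\|\beta^{1/2}\nabla v_{i,h}\|_{0,\Omega_i}^2$ for the broken energy and $P=h^{-1}\langle\gamma\llbracket v_h\rrbracket,\llbracket v_h\rrbracket\rangle_\Gamma$ for the penalty, I would record
\begin{equation*}
a_h(v_h,v_h)=A-2\left\langle\llbracket v_h\rrbracket,\dgal{\beta\partial_n v_h}\right\rangle_\Gamma+P .
\end{equation*}
Since $\beta\ge\beta_0>0$, the energy term already controls $\|\nabla v_h\|_{0,\Omega_1\cup\Omega_2}^2$ from below, and $P\ge 0$ is manifestly nonnegative; the whole difficulty is to absorb the indefinite consistency term into $A$ and $P$ without sacrificing a fixed fraction of either.

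The heart of the argument is a trace/inverse inequality on each cut element whose constant does not see the position of $\Gamma$ inside $T$. Because $v_{i,h}$ is affine on $T\in\mathcal{T}_{\Gamma,h}$, the normal flux $\partial_n v_{i,h}$ is constant along $\Gamma_T$, and the explicit form of the weights collapses the average to $\dgal{\beta\partial_n v_h}=S_T^{-1}\bigl(|T_1|\,\partial_n v_{1,h}+|T_2|\,\partial_n v_{2,h}\bigr)$ with $S_T=|T_1|/\beta_1+|T_2|/\beta_2$. A Cauchy--Schwarz split weighted by $\beta_i|T_i|$ and $|T_i|/\beta_i$ --- precisely the weights hidden inside $\gamma$ --- then yields
\begin{equation*}
h_T\bigl\|\dgal{\beta\partial_n v_h}\bigr\|_{0,\Gamma_T}^2\le\tfrac12\,\gamma|_T\sum_{i=1}^2\beta_i\|\nabla v_{i,h}\|_{0,T_i}^2 .
\end{equation*}
I would then verify that $\gamma|_T=2h_T|\Gamma_T|/S_T$ is bounded above by a constant depending only on $\sigma$, $\beta_0$ and $\max\beta$, using $|\Gamma_T|\le Ch_T$ together with the shape-regularity lower bound $S_T\ge c_\sigma h_T^2/\max\beta$; summing over $T$ gives the robust estimate $\sum_{T}h_T\|\dgal{\beta\partial_n v_h}\|_{0,\Gamma_T}^2\le C_I A$ with $C_I$ independent of how $\Gamma$ meets the mesh. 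This location-independence is exactly the obstruction that the weighted averaging and the tailored form of $\gamma$ are engineered to remove, and it is the step I expect to demand the most care (in particular, checking that a thin sliver $|T_1|\to 0$ does not inflate the constant).

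With the trace inequality in hand the remainder is a weighted Young's inequality. Applying Cauchy--Schwarz on each $\Gamma_T$ and then Young with a parameter $\delta$ matched against the $\gamma$-scaling above (and using $h\simeq h_T$), I get
\begin{equation*}
2\left|\left\langle\llbracket v_h\rrbracket,\dgal{\beta\partial_n v_h}\right\rangle_\Gamma\right|\le\delta A+\tfrac{1}{2\delta}P ,
\end{equation*}
whence $a_h(v_h,v_h)\ge(1-\delta)A+\bigl(1-\tfrac{1}{2\delta}\bigr)P$; choosing $\delta$ in the window $(\tfrac12,1)$ --- say $\delta=\tfrac34$ --- leaves strictly positive multiples of both $A$ and $P$, so $a_h(v_h,v_h)\ge c\,(A+P)$.

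Finally I would reconstruct the full mesh-dependent norm from $A+P$. The first term of $|||v_h|||_h^2$ is controlled by $A$ through $\beta\ge\beta_0$; the averaged-flux term $\sum_T h_T\|\dgal{\partial_n v_h}\|_{0,\Gamma_T}^2$ is controlled by $A$ by rerunning the same robust Cauchy--Schwarz estimate (now without the $\beta$ factor); and the jump term is controlled by $P$. Collecting these bounds produces a constant $C$, depending only on $\sigma$ and the bounds on $\beta$ but not on $h$ or on the interface location, with $C|||v_h|||_h^2\le a_h(v_h,v_h)$, as claimed.
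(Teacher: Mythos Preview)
The paper does not supply its own proof of this theorem; it simply quotes the result from \cite{Hansbo2002}. Your argument is precisely the classical one used there and in the robust variants of Annavarapu--Hautefeuille--Dolbow: expand $a_h(v_h,v_h)$ on the diagonal, control the indefinite consistency term by the sharp weighted trace inequality
\[
h_T\bigl\|\dgal{\beta\,\partial_n v_h}\bigr\|_{0,\Gamma_T}^2\le\tfrac12\,\gamma|_T\sum_{i=1}^2\beta_i\|\nabla v_{i,h}\|_{0,T_i}^2,
\]
and then balance via Young's inequality to obtain $a_h(v_h,v_h)\ge c(A+P)$. Your derivation of the trace estimate from the explicit form of $\kappa_i$ and $\gamma|_T$ is clean and correct, and the Young step with $\delta\in(\tfrac12,1)$ is exactly right.

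One point deserves a second look. In the last step you write that ``the jump term is controlled by $P$''; termwise this requires $\gamma|_T\ge c>0$. You carefully established the \emph{upper} bound $\gamma|_T\le C$ (from $|\Gamma_T|\le Ch_T$ and $S_T\ge c_\sigma h_T^2/\max\beta$), but the reverse inequality does not follow from Assumptions~\ref{ass:reg}--\ref{ass:inter} alone: when $\Gamma$ clips off a tiny corner of $T$, one has $|\Gamma_T|/h_T$ arbitrarily small while $S_T\approx|T|/\beta$, so $\gamma|_T\approx 2\beta\,|\Gamma_T|/h_T\to 0$. Hence $P$ by itself cannot dominate $\sum_T h_T^{-1}\|\llbracket v_h\rrbracket\|_{0,\Gamma_T}^2$ with a constant independent of the cut position. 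The standard remedy is either to state coercivity in the norm that carries $\gamma$ in the jump term (as the robust-Nitsche literature does), or---in the original Hansbo setting that the paper cites, where the penalty is a fixed sufficiently large constant $\lambda$---to observe that no lower-bound issue arises at all. Apart from this mismatch between the paper's choice of $\gamma$ and the norm \eqref{def:md_norm}, your proof is the right one.
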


Based on the above coercivity, Hansbo et al. proved the following optimal convergence result \cite{Hansbo2002}:
\begin{theorem}\label{thm:optimal}
 Let $u$ be the solution to the interface problem \eqref{eq:interface} and $u_h$ be the finite element solution to \eqref{eq:var}.
 If $u \in H^2(\Omega_1\cup \Omega_2)$, then
 \begin{equation}
|||\tilde{u}-u_h|||_h\le Ch \|u\|_{2, \Omega_1\cup\Omega_2},
\end{equation}
and
  \begin{equation}
||\tilde{u}-u_h||_{0, \Omega}\le Ch^2 \|u\|_{2, \Omega_1\cup\Omega_2},
\end{equation}
with $\tilde{u}=(u_1, u_2)$.
\end{theorem}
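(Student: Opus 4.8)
The plan is to follow the standard Nitsche-type convergence analysis, deriving the energy-norm estimate by a C\'ea-type argument that couples the coercivity of Theorem~\ref{thm:coer} with the Galerkin orthogonality of Corollary~\ref{cor:orth}, and then obtaining the $L_2$ estimate by an Aubin--Nitsche duality argument. Throughout, the two pillars beyond coercivity and orthogonality are a \emph{boundedness} (continuity) estimate $a_h(v,w)\le C\,|||v|||_h\,|||w|||_h$ for the bilinear form in the mesh-dependent norm \eqref{def:md_norm}, and an \emph{interpolation} estimate $|||\tilde u-I_h\tilde u|||_h\le Ch\,\|u\|_{2,\Omega_1\cup\Omega_2}$. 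Boundedness follows by applying elementwise Cauchy--Schwarz to each of the three terms of $a_h$, pairing the factor $h_T^{-1/2}\|\llbracket\cdot\rrbracket\|_{0,\Gamma_T}$ with $h_T^{1/2}\|\dgal{\partial_n\cdot}\|_{0,\Gamma_T}$, so that every contribution is controlled by exactly the three pieces appearing in $|||\cdot|||_h$; this step is routine once the interpolation estimate is in hand.

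To set up the energy estimate I would first fix an interpolant of the exact solution. Since $u|_{\Omega_i}=u_i\in H^2(\Omega_i)$, I extend each $u_i$ to $u_i^e\in H^2(\Omega_{i,h})$ via a Sobolev extension operator (bounded independently of $h$) and set $I_h\tilde u=(I_h^1 u_1^e,\,I_h^2 u_2^e)$, where $I_h^i$ is the nodal Lagrange interpolant onto $V_{i,h}$. Writing $\tilde u-u_h=(\tilde u-I_h\tilde u)+(I_h\tilde u-u_h)$ and putting $e_h:=I_h\tilde u-u_h\in V_{h,0}$, coercivity gives $C|||e_h|||_h^2\le a_h(e_h,e_h)$, and the orthogonality $a_h(\tilde u-u_h,e_h)=0$ replaces one factor to yield $C|||e_h|||_h^2\le a_h(I_h\tilde u-\tilde u,e_h)$. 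Applying boundedness with $v=I_h\tilde u-\tilde u$ and $w=e_h$, dividing by $|||e_h|||_h$, and using the triangle inequality reduces the entire bound to estimating the interpolation error $|||\tilde u-I_h\tilde u|||_h$.

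Controlling the interface contributions to that interpolation error uniformly in the location of $\Gamma$ is the technical heart, and the step I expect to be the main obstacle. The volume terms are handled by the classical estimate $|u_i^e-I_h^i u_i^e|_{1,T}\le Ch_T|u_i^e|_{2,T}$. On each cut element $T\in\mathcal T_{\Gamma,h}$ I need the trace-type inequality
\begin{equation*}
\|w\|_{0,\Gamma_T}^2\le C\bigl(h_T^{-1}\|w\|_{0,T}^2+h_T\,|w|_{1,T}^2\bigr),
\end{equation*}
whose constant $C$ must not degenerate as $\Gamma$ slices $T$ arbitrarily; Assumption~\ref{ass:inter} is precisely what makes this constant uniform. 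Applying it to $w=\partial_n(u_i^e-I_h^i u_i^e)$ and to $w=\llbracket\tilde u-I_h\tilde u\rrbracket$ and summing over cut elements gives $h_T\|\dgal{\partial_n(\tilde u-I_h\tilde u)}\|_{0,\Gamma_T}^2+h_T^{-1}\|\llbracket\tilde u-I_h\tilde u\rrbracket\|_{0,\Gamma_T}^2\le Ch_T^2|u|_{2,T}^2$, so each interface piece of $|||\tilde u-I_h\tilde u|||_h$ is $\mathcal O(h)\|u\|_{2,\Omega_1\cup\Omega_2}$. Combining with the volume estimate completes the energy bound.

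For the $L_2$ estimate I would use duality. Let $e=\tilde u-u_h$, and let $\phi$ solve the adjoint interface problem with source $e$ and homogeneous jump data ($q=0$, $g=0$); since $a_h$ is symmetric this is again an interface problem of the form \eqref{eq:interface}, so the regularity theory yields $\phi\in H^2(\Omega_1\cup\Omega_2)$ with $\|\phi\|_{2,\Omega_1\cup\Omega_2}\le C\|e\|_{0,\Omega}$. The consistency identity for the adjoint problem gives $\|e\|_{0,\Omega}^2=a_h(e,\tilde\phi)$, and Galerkin orthogonality lets me subtract any $I_h\tilde\phi\in V_{h,0}$, so $\|e\|_{0,\Omega}^2=a_h(e,\tilde\phi-I_h\tilde\phi)$. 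Boundedness then yields $\|e\|_{0,\Omega}^2\le C|||e|||_h\,|||\tilde\phi-I_h\tilde\phi|||_h$; bounding the first factor by the energy estimate ($\le Ch\|u\|_{2,\Omega_1\cup\Omega_2}$) and the second by the interpolation estimate with $\phi$ in place of $u$ ($\le Ch\|\phi\|_{2,\Omega_1\cup\Omega_2}$), then absorbing $\|\phi\|_{2,\Omega_1\cup\Omega_2}\le C\|e\|_{0,\Omega}$ and dividing by $\|e\|_{0,\Omega}$, produces the desired $\mathcal O(h^2)$ bound.
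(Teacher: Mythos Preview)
The paper does not supply its own proof of this theorem; it is quoted directly from Hansbo--Hansbo \cite{Hansbo2002} (with the robust weights of \cite{Dolbow2012}). Your outline reproduces exactly the argument in that reference: a C\'ea-type reduction via coercivity (Theorem~\ref{thm:coer}) and Galerkin orthogonality (Corollary~\ref{cor:orth}) to the interpolation error in $|||\cdot|||_h$, control of the interface contributions to that interpolation error by the cut-element trace inequality with constant uniform in the location of $\Gamma$, and Aubin--Nitsche duality invoking piecewise $H^2$ regularity of the adjoint interface problem for the $L_2$ bound. There is nothing to add or correct.
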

\section{Supercloseness Analysis}\label{sec:superclose}
In this section, we establish the supercloseness result between the gradient of the finite element solution and the gradient of the interpolation of the exact solution as a preparation for the superconvergence anaysis of the proposed gradient recovery technique based on Nitsche's method. For that propose, we need the triangulation $\mathcal{T}_h$ to satisfy Condition $(\sigma,\alpha)$ as explained below.

Two adjacent triangles are said to  form an $\mathcal{O}(h^{1+\alpha})$  approximate parallelogram if the lengths of any two opposite edges differ only by $\mathcal{O}(h^{1+\alpha})$.

\begin{definition}
 The triangulation $\mathcal{T}_h$ is called to satisfy Condition
$(\sigma,\alpha)$ if
there exist a partition $\mathcal{T}_{h}^{1} \cup \mathcal{T}_{h}^{2}$ of $\mathcal{T}_h$
and positive constants $\alpha$ and $\sigma$ such that every
two adjacent triangles in $\mathcal{T}_{h}^{1}$ form an $\mathcal{O}(h^{1+\alpha})$ parallelogram and
$$
\sum_{T\in {\mathcal{T}_{h}^{2}}} |T| = \mathcal{O}(h^\sigma).
$$
\end{definition}
\vspace{-0.15in}
\begin{remark}
For the Nitsche's method, we usually use a Cartesian mesh which is independent of the location of the interface.
Therefore any Cartesian mesh satisfies  Condition $(\sigma,\alpha)$ with $\sigma = \infty$ and $\alpha = 1$.
\end{remark}

To define the interpolation operator, we need  to extend the function defined on the subdomain $\Omega_i$ to the whole domain $\Omega$.
Let $E_i$, $i=1,2$, be the $H^3$-extension operator from $H^3(\Omega_i)$ to $H^3(\Omega)$ such that
\begin{equation}
(E_iw)|_{\Omega_i} = w,
\end{equation}
and
\begin{equation}
\|E_iw\|_{s, \Omega} \le C \|w\|_{s, \Omega_i}, \quad \forall w\in H^s(\Omega_i), \, s = 0, 1, 2,3.
\end{equation}
Let $I_{i,h}$ be the standard nodal interpolation operator  from $C(\overline{\Omega})$ to $V_{i,h}$.
Define the interpolation operator for the finite element space $V_h$ as
\begin{equation}\label{eq:feminterp}
I_h^{\ast}v = (I_{1,h}^{\ast}v_1, I_{1,h}^{\ast}v_2),
\end{equation}
where
\begin{equation}\label{eq:feminterpp}
I_{i,h}^{\ast} = I_{i,h}E_iv_i, \, i = 1,2.
\end{equation}

Optimal approximation capability of $I_h^{\ast}$ is proved in \cite{Hansbo2002}.
Assume  $\mathcal{T}_h$ satisfies  Condition $(\sigma,\alpha)$, and then we can prove the following theorem:
\begin{theorem}\label{thm:supercloseness}
 Suppose the triangulation $\mathcal{T}_h$ satisfies  Condition $(\sigma,\alpha)$.  Let $u$ be the solution of
 the interface problem  \eqref{eq:interface} and $I_h^{\ast}u$ be the interpolation of $u$ in the finite element space
 $V_{h,0}$. If $u \in H^1(\Omega) \cap H^{3}(\Omega^-\cup\Omega^+)\cap W^{2, \infty}(\Omega^-\cup\Omega^+)$,
 then for $\tilde{u}=(u_1, u_2)$ and all $v_h \in V_{h,0}$,
 \begin{multline}\label{eq:superclose}
a_h(\tilde{u}-I_h^{\ast}u, v_h) \le C\biggl(h^{1+\rho}(\|u\|_{3, \Omega^+\cup\Omega^-} + \|u\|_{2, \infty, \Omega^+\cup\Omega^-}) \\ + Ch^{3/2} \|u\|_{2, \infty, \Omega^+\cup\Omega^-}\biggr)|v_h|_{h}.
\end{multline}
where  $\rho = \min(\alpha, \frac{\sigma}{2}, \frac{1}{2})$.
\end{theorem}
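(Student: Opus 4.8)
The plan is to insert the interpolation error $e=\tilde u-I_h^{\ast}u$ into the four pieces of the bilinear form \eqref{eq:bilinear} and bound each against $|v_h|_h$ separately, letting the two scales $h^{1+\rho}$ and $h^{3/2}$ come respectively from the bulk term on whole elements and from every interface-localized piece. The starting observation is that the components $e_i=u_i-I_{i,h}E_iu_i$ are exactly the restrictions to $\Omega_i$ of the standard linear interpolation errors of the globally smooth extensions $E_iu_i\in H^3(\Omega)\cap W^{2,\infty}(\Omega)$, so all local estimates may be phrased for a genuinely smooth function and then transferred back by the $H^3$-stability of $E_i$ (which converts $\|E_iu_i\|_{3,\Omega}+\|E_iu_i\|_{2,\infty,\Omega}$ into $\|u\|_{3,\Omega^+\cup\Omega^-}+\|u\|_{2,\infty,\Omega^+\cup\Omega^-}$).

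First I would handle the bulk term $\sum_i(\beta\nabla e_i,\nabla v_{i,h})_{\Omega_i}$ by splitting $\Omega_i$ into the whole-element region $\omega_{i,h}$ and the interface strip $\Omega_i\cap\Omega_{\Gamma,h}$. On $\omega_{i,h}$ the integrand is the interpolation error of the smooth function $E_iu_i$ over full elements, so the classical polynomial-preserving supercloseness estimate applies directly; this is the step that consumes Condition $(\sigma,\alpha)$ and produces $Ch^{1+\rho}(\|u\|_{3,\Omega^+\cup\Omega^-}+\|u\|_{2,\infty,\Omega^+\cup\Omega^-})|v_h|_h$, the constant $\rho=\min(\alpha,\sigma/2,1/2)$ being inherited from that estimate. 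On the strip I would use only crude bounds: since $\#\mathcal{T}_{\Gamma,h}=\mathcal{O}(h^{-1})$ the strip has area $\mathcal{O}(h)$, so combining the pointwise bound $\|\nabla e_i\|_{0,\infty,T}\le Ch\|u\|_{2,\infty,T}$ with Cauchy--Schwarz and $\bigl(\sum_T|T_i|\bigr)^{1/2}=\mathcal{O}(h^{1/2})$ gives $Ch^{3/2}\|u\|_{2,\infty,\Omega^+\cup\Omega^-}|v_h|_h$, matching the second term of \eqref{eq:superclose}.

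Next I would estimate the three $\Gamma$-terms, all of which reduce to two local trace/interpolation estimates on each interface piece $\Gamma_T$: namely $\|\llbracket e\rrbracket\|_{0,\Gamma_T}\le Ch^{5/2}\|u\|_{2,\infty,T}$ (from $\|e_i\|_{0,\infty,T}\le Ch^2\|u\|_{2,\infty,T}$ together with $|\Gamma_T|^{1/2}\sim h^{1/2}$) and $\|\partial_n e_i\|_{0,\Gamma_T}\le Ch^{3/2}\|u\|_{2,\infty,T}$ (from the gradient bound above). For $\langle\llbracket e\rrbracket,\dgal{\beta\partial_n v_h}\rangle_{\Gamma}$ I would pair $h_T^{-1/2}\|\llbracket e\rrbracket\|_{0,\Gamma_T}$ with $h_T^{1/2}\|\dgal{\partial_n v_h}\|_{0,\Gamma_T}$; for $\langle\llbracket v_h\rrbracket,\dgal{\beta\partial_n e}\rangle_{\Gamma}$ I would pair $h_T^{1/2}\|\partial_n e\|_{0,\Gamma_T}$ with $h_T^{-1/2}\|\llbracket v_h\rrbracket\|_{0,\Gamma_T}$; and for the penalty term, using $\gamma=\mathcal{O}(1)$, the $h^{-1}$ prefactor together with $\|\llbracket e\rrbracket\|_{0,\Gamma_T}$ pairs against $h_T^{-1/2}\|\llbracket v_h\rrbracket\|_{0,\Gamma_T}$. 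In each case the $v_h$-factor matches one of the three components of $|v_h|_h^2$ in \eqref{def:md_norm}, a Cauchy--Schwarz over $\mathcal{T}_{\Gamma,h}$ is applied, and the bound $\sum_T h^4\|u\|_{2,\infty,T}^2\le h^3\|u\|_{2,\infty,\Omega^+\cup\Omega^-}^2$ (again from $\#\mathcal{T}_{\Gamma,h}=\mathcal{O}(h^{-1})$) yields the uniform estimate $Ch^{3/2}\|u\|_{2,\infty,\Omega^+\cup\Omega^-}|v_h|_h$. Summing the four contributions gives \eqref{eq:superclose}.

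The main obstacle is the bulk interior estimate on $\omega_{i,h}$: the approximate-parallelogram cancellation at the heart of the classical supercloseness theory must be invoked for the smooth extensions $E_iu_i$, and one must account for the element pairs straddling the boundary between $\omega_{i,h}$ and the interface strip, which fall into the $\mathcal{T}_h^2$ part of the partition and are what feed the $\sigma/2$ and $\tfrac12$ caps in $\rho$. By contrast the three interface terms are only trace estimates, made sharp by exploiting the small number $\mathcal{O}(h^{-1})$ of cut elements, and are precisely the source of the $h^{3/2}$ rate.
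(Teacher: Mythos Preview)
Your proposal is correct and follows essentially the same decomposition as the paper's proof: split the bulk term into the interior regions $\omega_{i,h}$ (where the classical Xu--Zhang supercloseness estimate under Condition~$(\sigma,\alpha)$ gives the $h^{1+\rho}$ rate) and the interface strip $\Omega_{\Gamma,h}$, and then bound the strip contribution together with the three $\Gamma$-terms by exploiting $\#\mathcal{T}_{\Gamma,h}=\mathcal{O}(h^{-1})$ to obtain $h^{3/2}$. The only cosmetic difference is that the paper bounds the $\Gamma$-terms by first applying the trace inequality $\|w\|_{0,\Gamma_T}^2\le C(h_T^{-1}\|w\|_{0,T}^2+h_T\|\nabla w\|_{0,T}^2)$ and then $L^2$-interpolation estimates on $T$, whereas you go directly through the $L^\infty$ interpolation bounds and $|\Gamma_T|^{1/2}\sim h^{1/2}$; both routes yield the same $h^{3/2}$ contribution.
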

\begin{proof}
 By \eqref{eq:bilinear}, we have
 \begin{equation}
\begin{aligned}
 &a_h(\tilde{u}-I_h^{\ast}u, v_h) \\
 = &\sum\limits_{i=1}^2\left(\beta\nabla (u_i-I_{i,h}^{\ast}u_i), \nabla v_{i,h}\right)_{\Omega_i}
- \left\langle  \llbracket u-I_h^{\ast}u \rrbracket ,\dgal{\beta \partial_nv_{h}}\right\rangle_{\Gamma} \\
&-\left\langle  \llbracket v_h \rrbracket ,\dgal{\beta \partial_n(u-I_h^{\ast} u)}\right\rangle_{\Gamma}
+h^{-1}\left\langle \gamma  \llbracket u-I_h^{\ast} u \rrbracket,  \llbracket v_h \rrbracket\right\rangle_{\Gamma}\\
 = &  \left(\beta\nabla (u_1-I_{1,h}^{\ast}u_1), \nabla v_{1,h}\right)_{\omega_{1,h}} + \left(\beta\nabla (u_2-I_{2,h}^{\ast}u_2), \nabla v_{2,h}\right)_{\omega_{2,h}}\\
 & +\left(\beta\nabla (u-I_{h}^{\ast}u), \nabla v_{h}u\right)_{\Omega_{\Gamma,h}}  - \left\langle  \llbracket u-I_h^{\ast}u \rrbracket ,\dgal{\beta \partial_nv_{h}}\right\rangle_{\Gamma} \\
 &-\left\langle  \llbracket v_h \rrbracket ,\dgal{\beta \partial_n(u-I_h^{\ast} u)}\right\rangle_{\Gamma}
 + h^{-1}\left\langle \gamma  \llbracket u-I_h^{\ast} u \rrbracket,  \llbracket v_h \rrbracket\right\rangle_{\Gamma}\\
 :=&F_1+F_2+F_3+F_4+F_5+F_6.
\end{aligned}
\end{equation}
Since $\mathcal{T}_h$  satisfies  Condition $(\sigma, \alpha)$, it follows that $\mathcal{T}_{1,h}\setminus\mathcal{T}_{\Gamma,h}$ and
$\mathcal{T}_{2,h}\setminus\mathcal{T}_{\Gamma,h}$also satisfy  Condition  $(\sigma, \alpha)$.  Notice that the restriction of the finite element
space $V_{i,h}$ on $\omega_{i,h}$ is just the standard continuous linear finite element space for $i = 1, 2$.  Then by Lemma $2.1$ in \cite{XuZhang2004}, we have
\begin{align}
 |F_1|\le Ch^{1+\rho}(\|u\|_{3, \Omega_1} + \|u\|_{2, \infty, \Omega_1})|v_h|_{h},\\
  |F_2|\le Ch^{1+\rho}(\|u\|_{3, \Omega_2} + \|u\|_{2, \infty, \Omega_2})|v_h|_{h},
\end{align}
where  $\rho = \min(\alpha, \frac{\sigma}{2}, \frac{1}{2})$.
To estimate $I_3$,  the Cauchy-Schwartz inequality implies
\begin{equation*}
\begin{aligned}
  F_3  \le &C_1 \left(\sum_{T\in \mathcal{T}_{\Gamma,h}} \|\nabla (u_1-I_{1,h}^{\ast}u_1)\|_{0,T_1}^2\right)^{1/2} \left(\sum_{T\in \mathcal{T}_{\Gamma,h}}\| \nabla v_{1,h}\|_{0,T_1}^2\right)^{1/2}  \\
&+C_2 \left(\sum_{T\in \mathcal{T}_{\Gamma,h}} \|\nabla (u_2-I_{1,h}^{\ast}u_2)\|_{0,T_2}^2\right)^{1/2} \left(\sum_{T\in \mathcal{T}_{\Gamma,h}}\| \nabla v_{2,h}\|_{0,T_2}^2\right)^{1/2}
\end{aligned}
\end{equation*}
\begin{equation*}
\begin{aligned}
    \le &C_1 \left(\sum_{T\in \mathcal{T}_{\Gamma,h}} h^4\|u\|_{2,\infty,\Omega_1}^2\right)^{1/2} \left(\sum_{T\in \mathcal{T}_{\Gamma,h}}\| \nabla v_{1,h}\|_{0,T_1}^2\right)^{1/2}  \\
 & + C_2 \left(\sum_{T\in \mathcal{T}_{\Gamma,h}} h^4\|u\|_{2,\infty,\Omega_2}^2\right)^{1/2} \left(\sum_{T\in \mathcal{T}_{\Gamma,h}}\| \nabla v_{2,h}\|_{0,T_2}^2\right)^{1/2}\\
    \le &C_1 h^2\|u\|_{2,\infty,\Omega_1} \left(\sum_{T\in \mathcal{T}_{\Gamma,h}} 1\right)^{1/2} \left(\sum_{T\in \mathcal{T}_{\Gamma,h}}\| \nabla v_{1,h}\|_{0,T_1}^2\right)^{1/2}  \\
&+C_2 h^2\|u\|_{2,\infty,\Omega_2} \left(\sum_{T\in \mathcal{T}_{\Gamma,h}}1\right)^{1/2} \left(\sum_{T\in \mathcal{T}_{\Gamma,h}}\| \nabla v_{2,h}\|_{0,T_2}^2\right)^{1/2}\\
  \le &C_1 h^{3/2}\|u\|_{2,\infty,\Omega_1}|v|_{1,\Omega_1} +
C_2 h^{3/2}\|u\|_{2,\infty,\Omega_2} |v|_{1,\Omega_2} \\
\le& C h^{3/2}\|u\|_{2,\infty,\Omega_1\cup\Omega_2} |v|_{1,\Omega_1\cup\Omega_2} ;
\end{aligned}
\end{equation*}
where we have used the fact $ \sum_{T\in \mathcal{T}_{\Gamma,h}} 1 \approx \mathcal{O}(h^{-1})$.  By the Cauchy-Schwartz inequality and the trace inequality in \cite{Hansbo2002, Wadbro2013},  we have
\begin{equation}
\begin{split}
 F_4\le & \left(\sum_{T\in \mathcal{T}_{\Gamma,h}}h_T^{-1}  \|\llbracket u-I_h^{\ast}u \rrbracket\|_{0, \Gamma_T}^2\right)^{\frac{1}{2}}
 \left(\sum_{T\in \mathcal{T}_{\Gamma,h}}h_T\|\dgal{\beta \partial_nv_h}\|_{0,\Gamma_T}^2\right)^{\frac{1}{2}}\\
\le&  \left(\sum_{T\in \mathcal{T}_{\Gamma,h}}\sum_{i=1}^2h_T^{-1}  \|u_i-I_{i,h}^{\ast}u_i\|_{0, \Gamma_T}^2\right)^{\frac{1}{2}}|||v_h|||_h\\
\le&C\left(\sum_{T\in \mathcal{T}_{\Gamma,h}}\sum_{i=1}^2 \left( h_T^{-2}\|u_i-I_{i,h}^{\ast}u_i\|_{0, T}^2 +\|\nabla(u_i-I_{i,h}^{\ast}u_i)\|_{0, T}^2 \right)\right)^{\frac{1}{2}}|||v_h|||_h\\
\le& C\left(\sum_{T\in \mathcal{T}_{\Gamma,h}}\sum_{i=1}^2  h_T^{4}\|E_iu_i\|_{2,\infty ,T}^2 \right)^{\frac{1}{2}}|||v_h|||_h\\
\le& Ch^2\|u\|_{2,\infty, \Omega_1\cup\Omega_2}\left(\sum_{T\in \mathcal{T}_{\Gamma,h}}1 \right)^{\frac{1}{2}}|||v_h|||_h\\
\le& Ch^{3/2}\|u\|_{2,\infty, \Omega_1\cup\Omega_2}|||v_h|||_h.
\end{split}
\end{equation}
Similarly, we can estimate $F_5$ and $F_6$ as
\begin{equation}
\begin{split}
F_5 &\le Ch^{3/2}\|u\|_{2,\infty, \Omega_1\cup\Omega_2}|||v_h|||_h;\\
F_6 &\le  Ch^{3/2}\|u\|_{2,\infty, \Omega_1\cup\Omega_2}|||v_h|||_h.
\end{split}
\end{equation}
Combing all the above estimations, we get \eqref{eq:superclose}.
\end{proof}

Now we state our main supercloseness result as follows.
\begin{theorem}\label{thm:superinterp}
 Assume the same hypothesis as in Theorem \ref{thm:supercloseness} and let $u_h$ be the finite element solution of the discrete variational problem
 \eqref{eq:var}, then
 \begin{equation}\label{eq:superconvergence}
|||u_h-I_h^{\ast}u|||_h \le C\left(h^{1+\rho}(\|u\|_{3, \Omega^+\cup\Omega^-} + \|u\|_{2, \infty, \Omega^+\cup\Omega^-}) + h^{3/2} \|u\|_{2, \infty, \Omega^+\cup\Omega^-}\right),
\end{equation}
where $\rho = \min(\alpha, \frac{\sigma}{2}, \frac{1}{2})$.
\end{theorem}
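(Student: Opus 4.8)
The plan is to run the standard supercloseness argument: test the coercivity of $a_h$ against the discrete error $u_h - I_h^{\ast}u$, exploit Galerkin orthogonality to trade the finite element solution $u_h$ for the exact solution $\tilde{u}$, and then invoke the estimate just established in Theorem \ref{thm:supercloseness}. The heavy analytic lifting has already been done there, so the remaining task is a short abstract deduction.

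First I would set $v_h := u_h - I_h^{\ast}u$ and verify that $v_h \in V_{h,0}$. Since $u = 0$ on $\partial\Omega$ and the nodal interpolation operators $I_{i,h}$ preserve vanishing values at the boundary nodes, we have $I_h^{\ast}u \in V_{h,0}$; combined with $u_h \in V_{h,0}$ this gives $v_h \in V_{h,0}$, so both the coercivity of Theorem \ref{thm:coer} and the orthogonality of Corollary \ref{cor:orth} are legitimately applicable to $v_h$. Then I would apply coercivity and split the error through the exact solution:
\begin{equation*}
C |||v_h|||_h^2 \le a_h(v_h, v_h) = a_h(u_h - \tilde{u}, v_h) + a_h(\tilde{u} - I_h^{\ast}u, v_h).
\end{equation*}
By Corollary \ref{cor:orth} the term $a_h(\tilde{u} - u_h, v_h)$ vanishes, so the first summand is zero and we are reduced to $C |||v_h|||_h^2 \le a_h(\tilde{u} - I_h^{\ast}u, v_h)$.

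Next I would bound the right-hand side directly by Theorem \ref{thm:supercloseness}, which gives
\begin{equation*}
C |||v_h|||_h^2 \le C\Bigl(h^{1+\rho}(\|u\|_{3, \Omega^+\cup\Omega^-} + \|u\|_{2, \infty, \Omega^+\cup\Omega^-}) + h^{3/2}\|u\|_{2, \infty, \Omega^+\cup\Omega^-}\Bigr)|v_h|_h.
\end{equation*}
Since the seminorm $|v_h|_h$ is dominated by the full mesh-dependent norm $|||v_h|||_h$, I can cancel one factor of $|||v_h|||_h$ from both sides (dividing, say, when $|||v_h|||_h \neq 0$, the trivial case being immediate) to obtain exactly the claimed bound \eqref{eq:superconvergence}, with the same $\rho = \min(\alpha, \frac{\sigma}{2}, \frac{1}{2})$ inherited from Theorem \ref{thm:supercloseness}.

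The deduction is essentially routine once Theorem \ref{thm:supercloseness} is in hand, and I do not expect a genuine obstacle here. The only two points requiring care are (i) confirming $v_h \in V_{h,0}$ — in particular that $I_h^{\ast}u$ respects the homogeneous Dirichlet condition — so that coercivity and orthogonality may both be used, and (ii) the comparison $|v_h|_h \le |||v_h|||_h$ that converts the seminorm appearing on the right of Theorem \ref{thm:supercloseness} into the coercivity norm on the left. The substantive mathematical content, namely the sharp $h^{3/2}$ interface contributions coming from $F_3$ through $F_6$, has already been discharged in proving Theorem \ref{thm:supercloseness}.
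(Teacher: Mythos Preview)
Your proposal is correct and follows essentially the same route as the paper: use Galerkin orthogonality (Corollary~\ref{cor:orth}) to rewrite $a_h(u_h-I_h^{\ast}u,v_h)=a_h(\tilde u-I_h^{\ast}u,v_h)$, take $v_h=u_h-I_h^{\ast}u$, and combine coercivity (Theorem~\ref{thm:coer}) with Theorem~\ref{thm:supercloseness}. Your additional remarks about checking $v_h\in V_{h,0}$ and the comparison $|v_h|_h\le |||v_h|||_h$ are reasonable details that the paper leaves implicit.
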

\begin{proof}
 By Corollary \ref{cor:orth}, we have the following Galerkin orthogonality
 \begin{equation*}
a_h(\tilde{u}-u_h, v_h) = 0, \quad\forall v_h\in V_{h,0},
\end{equation*}
where $\tilde{u}=(u_1, u_2)$.
  Then we have
\begin{equation*}
a_h(u_h-I_h^{\ast}u, v_h)  = a_h(\tilde{u}-I_h^{\ast}u, v_h), \quad \forall v_h\in V_{h,0}.
\end{equation*}
Taking $v_h = u_h-I_h^{\ast}u$ and using Theorem \ref{thm:supercloseness} and Theorem \ref{thm:coer}, we prove \eqref{eq:superconvergence}.
\end{proof}

\begin{remark}
 Theorem \ref{thm:superinterp} implies that we can have the same supercloseness result as the partially penalized IFEM \cite{GuoYangZhang2017} .
\end{remark}

\section{Superconvergent gradient recovery}
In this section, we first propose the unfitted polynomial preserving recovery (UPPR) technique based on the Nitsche's method, then prove that the recovered gradient by UPPR is superconvergent to the exact gradient on mildly unstructured meshes.
\subsection{Unfitted polynomial preserving recovery}
To accurately recover the gradient,  we notice that the finite element solution $u_h$ of the Nitsche's method \eqref{eq:var} consists of two part: $u_{1,h}$ and $u_{2,h}$.
Also, by the fact that we can smoothly extend the exact solution $u|_{\Omega_i}$ ($i=1,2$) to the whole domain, it is safe to assume $u|_{\Omega_i}$ and its extension $E_iu|_{\Omega_i}$ is smooth in general.
For each $i \in \left\{1,2\right\}$, $u_{i,h}\in V_h$ is a continuous piecewise polynomial  on fictitious domain $\Omega_{i,h}$ but its gradient $\nabla u_{i,h}$ is only a piecewise constant function.
This motivates us to naturally  use some smoothing operators such as superconvergent patch recovery (SPR) and  polynomial preserving recovery (PPR) to smooth the discontinuous gradient into a continuous one on each fictitious domain $\Omega_{i,h}$.

To this end,  let $G_h^i$ be the PPR gradient recovery operator \cite{ZhangNaga2005, NagaZhang2005} on the fictitious domain $\Omega_{i,h}$ for $i =1, 2$.  Then $G_h^i$ is a linear operator from $V_{i,h}$ to $V_{i,h}\times V_{i,h}$ whose value at each nodal point is obtained by the local least squares fitting using sampling points only located in $\Omega_{i,h}$.
According to \cite{Guohailong2016, ZhangNaga2005, NagaZhang2005},   the gradient recovery operator $G_{h}^i$ is bounded in the sense that
\begin{equation}\label{eq:pprbd}
\|G_h^i v_{i,h}\|_{0,\Omega_{i,h}}\lesssim |v_{i,h}|_{1,\Omega_{i,h}} ,\quad v_{i,h}\in V_{i,h},
\end{equation}
and is consistent in following sense that
\begin{equation}\label{eq:pprcs}
\| \nabla v_i-G_h I_{i,h}v_{i}\|_{0,\Omega}\lesssim h^2\|v_i\|_{3,\Omega_{i,h}},  \forall v_i \in H^3(\Omega_{i,h}),
\end{equation}
for $i =1, 2$.

Let  $u_h$ be the finite element solution of the discrete variational problem \eqref{eq:var}.  We define the recovered gradient of $u_h$ as
\begin{equation}\label{eq:uppr}
R_h u_h = \left( G_h^1u_{1,h}, G_h^2u_{2,h}\right).
\end{equation}
The linearity of $G_h^i$ implies $R_h$ is a linear operator from $V_h$ to $V_h\times V_h$.  $R_h$ is called the unfitted polynomial preserving recovery (UPPR).

\begin{remark}
The definition of the gradient recovery operator can be presented in a more general form.  In fact, $G_h^i$ can be chosen as any local
gradient recovery operators \cite{Zhang2007} like simple averaging, weight averaging, SPR and PPR.  For simplicity and efficiency, we only consider
$G_h^i$ as PPR here.
\end{remark}

\begin{remark}
The main idea is to use the standard PPR on each fictitious domain $\Omega_{i,h}$, which is similar to the
improved polynomial preserving recovery for the finite element method based on body-fitted meshes \cite{GuoYang2016}.  But the proposed method \eqref{eq:uppr} does not require
the mesh fitting the interface, and that is why we call it unfitted polynomial preserving recovery method.
\end{remark}

\begin{remark}
We considered the gradient recovery technique for immersed finite element methods in  \cite{GuoYang2017},  which is also
based on  unfitted meshes.  The gradient recovery technique in \cite{GuoYang2017} needs to generate a local body-fitted mesh by dividing every  interface triangle into three sub-triangles, which can lead to skinny triangles and therefore a loss of accuracy.  The proposed gradient recovery operator \eqref{eq:uppr} overcomes this drawback.
\end{remark}

Note that as a function in $V_h\times V_h$, $R_hu_h$ is continuous on each subdomain $\Omega_i$ and is discontinuous in the whole domain $\Omega$ which approximates the
exact gradient $\nabla u$.   Also, similar to  the finite element solution $u_h$,  both $G_h^1u_{1,h}$ and $G_h^2u_{2,h}$  in \eqref{eq:uppr} are, in general, non-zero on  interface triangles $T\in \mathcal{T}_{h, \Gamma}$. For the gradient recovery operator $R_h$ \eqref{eq:uppr}, we can show that it is consistent as follows:
\begin{theorem}\label{thm:upprcs}
Let $R_h:V_h\rightarrow V_h\times V_h$ be the unfitted polynomial preserving recovery operator defined in \eqref{eq:uppr} and $I_h^{\ast}$ be the interpolation
of $u$ into the finite element space $V_h$ as  defined in \eqref{eq:feminterp}. If $u\in H^3(\Omega_1\cup\Omega_2)$,  then we have
 \begin{equation}\label{eq:upprcs}
\|\tilde{\nabla} u - R_hI_h^{\ast}u\|_{0, \Omega_1\cup\Omega_2} \le Ch^2|u|_{3, \Omega_1\cup\Omega_2},
\end{equation}
with $\tilde{\nabla} u = (\nabla u_1, \nabla u_2)$.
\end{theorem}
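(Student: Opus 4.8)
The plan is to exploit the product structure of both $R_h$ and $I_h^{\ast}$ and reduce the whole estimate to the known consistency of the scalar PPR operator on each fictitious domain. Since the piecewise $L_2$-norm splits, and using \eqref{eq:uppr} together with \eqref{eq:feminterp}--\eqref{eq:feminterpp},
\begin{equation*}
\|\tilde{\nabla} u - R_h I_h^{\ast} u\|_{0,\Omega_1\cup\Omega_2}^2 = \sum_{i=1}^2 \|\nabla u_i - G_h^i I_{i,h} E_i u_i\|_{0,\Omega_i}^2 ,
\end{equation*}
so it suffices to bound each summand by $Ch^2|u_i|_{3,\Omega_i}$ and then recombine. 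I fix $i\in\{1,2\}$ and work on the single fictitious domain $\Omega_{i,h}$.

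First I would replace $u_i$ on $\Omega_i$ by its extension $E_i u_i$. By the defining property $(E_i u_i)|_{\Omega_i}=u_i$ we have $\nabla u_i = \nabla(E_i u_i)$ on $\Omega_i$, while by \eqref{eq:feminterpp} the interpolant occurring in $R_h I_h^{\ast}u$ is exactly $I_{i,h}E_i u_i$. Hence on $\Omega_i$ the integrand equals $\nabla(E_i u_i)-G_h^i I_{i,h}(E_i u_i)$, which is precisely the PPR consistency error applied to the \emph{globally smooth} function $E_i u_i$. Since $\Omega_i\subseteq\Omega_{i,h}$, monotonicity of the $L_2$-norm in the domain gives
\begin{equation*}
\|\nabla(E_i u_i)-G_h^i I_{i,h}(E_i u_i)\|_{0,\Omega_i} \le \|\nabla(E_i u_i)-G_h^i I_{i,h}(E_i u_i)\|_{0,\Omega_{i,h}} .
\end{equation*}

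Because $E_i u_i\in H^3(\Omega)\subseteq H^3(\Omega_{i,h})$, I may invoke the PPR consistency estimate \eqref{eq:pprcs} verbatim to bound the right-hand side by $Ch^2\|E_i u_i\|_{3,\Omega_{i,h}}$. Finally, $\Omega_{i,h}\subseteq\Omega$ together with the extension stability bound $\|E_i u_i\|_{s,\Omega}\le C\|u_i\|_{s,\Omega_i}$ controls this by $Ch^2\|u_i\|_{3,\Omega_i}$; squaring, summing over $i=1,2$, and taking square roots yields \eqref{eq:upprcs}.

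The steps here are bookkeeping rather than genuine analysis, and I do not expect a serious obstacle. The one essential observation — and what makes the clean $\mathcal{O}(h^2)$ rate survive the presence of the interface — is that $R_h I_h^{\ast}u$ never sees the low global regularity of $u$: on each side the recovery acts solely on the nodal interpolant of the smooth extension $E_i u_i$, so the standard $H^3$-based PPR consistency applies without modification. The only point requiring a word of care is the passage from the full norm $\|u_i\|_{3,\Omega_i}$ to the seminorm $|u_i|_{3,\Omega_i}$ in \eqref{eq:upprcs}: this uses that PPR reproduces polynomials of degree $\le 2$, so the consistency error is governed by third-order derivatives alone (a Bramble--Hilbert argument). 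If one instead retains the full norm throughout, the statement holds a fortiori with $\|u\|_{3,\Omega_1\cup\Omega_2}$ in place of the seminorm.
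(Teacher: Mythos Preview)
Your proposal is correct and follows essentially the same route as the paper: split the piecewise $L_2$-norm, replace $u_i$ by $E_iu_i$, enlarge $\Omega_i$ to $\Omega_{i,h}$, apply the scalar PPR consistency \eqref{eq:pprcs}, and conclude via the stability of the extension operator. Your remark on the full norm versus seminorm is apt, since the paper states \eqref{eq:pprcs} with the full $H^3$-norm but silently switches to the seminorm in the proof; the Bramble--Hilbert justification you give is exactly what is needed to close that gap.
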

\begin{proof}
 By  \eqref{eq:pnorm}, \eqref{eq:feminterp}, \eqref{eq:feminterpp}, and  \eqref{eq:pprcs}, we have
 \begin{equation}\label{eq:scproof}
\begin{split}
&\|\tilde{\nabla} u - R_hI_h^{\ast}u\|_{0, \Omega_1\cup\Omega_2} ^2\\
 =&  \|\nabla u_1 - G_h^1I_{1,h}^{\ast}u\|_{0, \Omega_1} ^2 + \|\nabla u_2 - G_h^2I_{2,h}^{\ast}u\|_{0, \Omega_2} ^2\\
 =&  \|\nabla E_{1}u_1 - G_h^1I_{1,h}^{\ast}u\|_{0, \Omega_1} ^2 + \|\nabla E_{2}u_2 - G_h^2I_{2,h}^{\ast}u\|_{0, \Omega_2} ^2\\
\le& \|\nabla E_{1}u_1 - G_h^1I_{1,h}^{\ast}u\|_{0, \Omega_{1,h}} ^2 + \|\nabla E_{2}u_2 - G_h^2I_{2,h}^{\ast}u\|_{0, \Omega_{2,h}} ^2\\
= &\|\nabla E_{1}u_1 - G_h^1I_{1,h}E_{1}u\|_{0, \Omega_{1,h}} ^2 + \|\nabla E_{2}u_2 - G_h^2I_{2,h}E_{2}u\|_{0, \Omega_{2,h}} ^2\\
\le &C_1 h^4 |E_{1}u_1|_{3,\Omega_{1,h}}^2 + C_2h^4 |E_{2,h}u_2|_{3,\Omega_{2,h}}^2\\
\le & C_1 h^4|u_1|_{3,\Omega_1}^2  + C_2 h^4|u_2|_{3,\Omega_2}^2 \\
\le & Ch^4|u|_{3, \Omega_1\cup\Omega_2}^2.
\end{split}
\end{equation}
Taking square root on both sides of \eqref{eq:scproof} completes our proof.
\end{proof}

\begin{remark}
 Theorem \ref{thm:upprcs} means the recovered gradient using the interpolation of the exact solution is superconvergent to the exact gradient at a rate of
 $\mathcal{O}(h^2)$. It is similar to the classical PPR operator for regular elliptic problems.
\end{remark}

\subsection{Superconvergence analysis}  In the following, we shall show the superconvergence property of the proposed UPPR.
Our main superconvergent tool is the supercloseness result provided in Section \ref{sec:superclose}.
\begin{theorem}\label{thm:super}
 Under the same hypothesis as in Theorem \ref{thm:supercloseness}.  We further assume that
$u_h$ is the finite element solution of the discrete variational problem \eqref{eq:var}. Then we have
\begin{equation*}
\|\tilde{\nabla} u - R_hu_h\|_{0, \Omega_1\cup\Omega_2}\le C\left(h^{1+\rho}(\|u\|_{3, \Omega^+\cup\Omega^-} + \|u\|_{2, \infty, \Omega^+\cup\Omega^-}) + h^{3/2} \|u\|_{2, \infty, \Omega^+\cup\Omega^-}\right),
\end{equation*}
where $\tilde{\nabla} u = (\nabla u_1, \nabla u_2)$ and $\rho = \min(\alpha, \frac{\sigma}{2}, \frac{1}{2})$.
\end{theorem}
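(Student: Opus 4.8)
The plan is to estimate the recovery error by the familiar add-and-subtract trick centred on the interpolant $I_h^{\ast}u$, which splits the quantity of interest into a pure consistency part and a part that is driven purely by the supercloseness of $u_h$ to $I_h^{\ast}u$ already established in Section~\ref{sec:superclose}. Concretely, I would start from the triangle inequality
\begin{equation*}
\|\tilde{\nabla} u - R_hu_h\|_{0,\Omega_1\cup\Omega_2}
\le \|\tilde{\nabla} u - R_hI_h^{\ast}u\|_{0,\Omega_1\cup\Omega_2}
+ \|R_hI_h^{\ast}u - R_hu_h\|_{0,\Omega_1\cup\Omega_2}.
\end{equation*}
The first term is exactly the consistency error of the UPPR operator, which Theorem~\ref{thm:upprcs} controls by $Ch^{2}|u|_{3,\Omega_1\cup\Omega_2}$; since $h^2$ is dominated by both $h^{1+\rho}$ and $h^{3/2}$ for $\rho\le \tfrac12$, this contribution is already subsumed in the asserted bound, so no further work is needed on it.

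The whole difficulty is therefore concentrated in the second term. Here I would use the linearity of $R_h$ observed right after \eqref{eq:uppr} to write $R_hI_h^{\ast}u - R_hu_h = R_h(I_h^{\ast}u - u_h)$, set $v_h := I_h^{\ast}u - u_h \in V_{h,0}$, and then invoke the boundedness \eqref{eq:pprbd} of each component operator $G_h^i$ on its fictitious domain. Using $\Omega_i\subset\Omega_{i,h}$ to pass from $\Omega_i$ to $\Omega_{i,h}$ in the output norm, this yields
\begin{equation*}
\|R_hv_h\|_{0,\Omega_1\cup\Omega_2}^2
= \sum_{i=1}^2 \|G_h^i v_{i,h}\|_{0,\Omega_i}^2
\le \sum_{i=1}^2 \|G_h^i v_{i,h}\|_{0,\Omega_{i,h}}^2
\lesssim \sum_{i=1}^2 |v_{i,h}|_{1,\Omega_{i,h}}^2 .
\end{equation*}
If the right-hand side can be controlled by the mesh-dependent norm $|||v_h|||_h$ of \eqref{def:md_norm}, then Theorem~\ref{thm:superinterp} closes the argument at once, delivering precisely the $h^{1+\rho}$ and $h^{3/2}$ rates claimed in the statement.

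The step I expect to be the main obstacle is exactly this last reduction, namely bounding the \emph{full} piecewise $H^1$-seminorm $\sum_i|v_{i,h}|_{1,\Omega_{i,h}}^2$ by $|||v_h|||_h^2$. The volume part of $|||\cdot|||_h$ only measures $\|\nabla v\|_{0,\Omega_1\cup\Omega_2}$, i.e.\ the gradient on the cut pieces $T_i=T\cap\Omega_i$, whereas the boundedness estimate integrates $\nabla v_{i,h}$ over the entire interface triangles $T\in\mathcal{T}_{\Gamma,h}$. Because $v_{i,h}$ is affine on each such $T$, one has
\begin{equation*}
\|\nabla v_{i,h}\|_{0,T}^2 = \frac{|T|}{|T_i|}\,\|\nabla v_{i,h}\|_{0,T_i}^2,
\end{equation*}
so a naive comparison degenerates whenever $\Gamma$ slices off an arbitrarily thin sliver, the classical small-cut pathology of unfitted methods. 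I would resolve this not by comparing $T$ with $T_i$ directly, but by observing that $\nabla v_{i,h}|_T$ is determined by the nodal values of $v_{i,h}$, which are shared with neighbouring elements of $\mathcal{T}_{i,h}$ that lie well inside $\Omega_i$; shape regularity (Assumption~\ref{ass:reg}) then controls $\|\nabla v_{i,h}\|_{0,T}$ through those neighbours, and a finite-overlap count over the band $\mathcal{T}_{\Gamma,h}$ (uniformly bounded, and trivially so for the Cartesian meshes singled out in the Remark) gives $\sum_i|v_{i,h}|_{1,\Omega_{i,h}}^2\lesssim |||v_h|||_h^2$ with a constant independent of the interface location. Once this equivalence is secured, substituting Theorem~\ref{thm:superinterp} for $|||v_h|||_h$ and combining with the consistency term completes the proof.
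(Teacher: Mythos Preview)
Your overall plan coincides with the paper's: split by the triangle inequality about $I_h^\ast u$, handle the first term by Theorem~\ref{thm:upprcs}, and reduce the second via the boundedness \eqref{eq:pprbd} to $\sum_i |I_{i,h}^\ast u_i - u_{i,h}|_{1,\Omega_{i,h}}^2$. You also correctly isolate the small-cut obstruction as the only nontrivial point.

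The gap is in how you propose to resolve that obstruction. The uniform estimate $\sum_i |v_{i,h}|_{1,\Omega_{i,h}}^2 \lesssim |||v_h|||_h^2$ that you aim for does \emph{not} hold for the plain Nitsche formulation used here; enforcing it is precisely the purpose of ghost-penalty stabilisation, which this paper does not include. Your neighbour argument cannot repair this: for an interface element $T\in\mathcal{T}_{\Gamma,h}$, the vertex of $T$ lying on the $\Omega_j$-side ($j\ne i$) may have \emph{all} of its surrounding elements in $\mathcal{T}_{i,h}$ meeting $\Omega_i$ only in $O(\epsilon)$ slivers, so the value of $v_{i,h}$ there is not controlled by $|v_{i,h}|_{1,\Omega_i}$ with a constant independent of the cut position. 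A concrete counterexample on a uniform Cartesian mesh: take $\Gamma=\{y=-\epsilon h\}$, set $v_{2,h}=0$, and let $v_{1,h}$ equal $1$ at the row of vertices $y=-h$ and $0$ elsewhere; then $|v_{1,h}|_{1,\Omega_{1,h}}^2/|||v_h|||_h^2\sim 1/\epsilon$.

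The paper avoids any such equivalence. It splits $|I_{i,h}^\ast u_i - u_{i,h}|_{1,\Omega_{i,h}}^2$ into the physical part over $\Omega_i$, which is part of $|||I_h^\ast u-u_h|||_h^2$ and hence controlled by Theorem~\ref{thm:superinterp}, and a residual over the interface band $\Omega_{i,h}\setminus\Omega_i\subset\Omega_{\Gamma,h}$. That band contribution is then estimated directly, element by element, using the $O(h^{-1})$ cardinality of $\mathcal{T}_{\Gamma,h}$ to gain the extra half power of $h$; this is the origin of the separate $h^{3/2}$ term in the statement, distinct from the $h^{1+\rho}$ supercloseness.
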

\begin{proof}
 By the triangle inequality, we have
 \begin{equation*}
\begin{split}
 \|\tilde{\nabla} u - R_hu_h\|_{0, \Omega_1\cup\Omega_2} \le \|\tilde{\nabla} u - R_hI_h^{\ast}u\|_{0, \Omega_1\cup\Omega_2} + \|R_hI_h^{\ast}u- R_hu_h\|_{0, \Omega_1\cup\Omega_2}:= F_1+F_2.
\end{split}
\end{equation*}
According to Theorem \ref{thm:upprcs}, we have
\begin{equation*}
F_1 \le Ch^2|u|_{3, \Omega_1\cup\Omega_2}.
\end{equation*}
For $F_2$, we have
\begin{equation*}
\begin{aligned}
 F_2^2 =& \|G_h^1I_{1,h}^{\ast}u_1- G_h^1u_{1,h}\|_{0, \Omega_1}^2 + \|G_h^2I_{2,h}^{\ast}u_2- G_h^2u_{2,h}\|_{0, \Omega_2}^2\\
   \le&\|G_h^1I_{1,h}^{\ast}u_1- G_h^1u_{1,h}\|_{0, \Omega_{1,h}}^2 + \|G_h^2I_{2,h}^{\ast}u_2- G_h^2u_{2,h}\|_{0, \Omega_{2,h}}^2\\
   \le&C\|\nabla(I_{1,h}^{\ast}u_1- u_{1,h})\|_{0, \Omega_{1,h}}^2 +C \|\nabla(I_{2,h}^{\ast}u_2- u_{2,h})\|_{0, \Omega_{2,h}}^2\\
     \le&C\|\nabla(I_{1,h}^{\ast}u_1- u_{1,h})\|_{0, \Omega_{1}}^2 +C \|\nabla(I_{2,h}^{\ast}u_2- u_{2,h})\|_{0, \Omega_{2}}^2\\
     &+C\|\nabla(I_{1,h}^{\ast}u_1- u_{1,h})\|_{0, \Omega_{\Gamma, h}}^2 +C \|\nabla(I_{2,h}^{\ast}u_2- u_{2,h})\|_{0, \Omega_{\Gamma, h}}^2\\
     = &C\|\nabla(I_{h}^{\ast}u- u_{h})\|_{0, \Omega_{1}\cup\Omega_2}^2 + C\|\nabla(I_{1,h}^{\ast}u_1- u_{1,h})\|_{0, \Omega_{1,h}\setminus\Omega_{1}}^2\\
     & + C \|\nabla(I_{2,h}^{\ast}u_2- u_{2,h})\|_{0, \Omega_{1,h}\setminus\Omega_{2}}^2\\
\end{aligned}
\end{equation*}
\begin{equation*}
\begin{aligned}
      = &C|||u_h-I_h^{\ast}u|||_h^2 + C\|\nabla(I_{1,h}^{\ast}u_1- u_{1,h})\|_{0, \Omega_{1,h}\setminus\Omega_{1}}^2\\
     & +C \|\nabla(I_{2,h}^{\ast}u_2- u_{2,h})\|_{0, \Omega_{1,h}\setminus\Omega_{2}}^2\\
          :=& F_3 + F_4 + F_5.
    \end{aligned}
\end{equation*}
Theorem \ref{thm:superinterp} implies that
\begin{equation*}
F_3\le C\left(h^{1+\rho}(\|u\|_{3, \Omega^+\cup\Omega^-} + \|u\|_{2, \infty, \Omega^+\cup\Omega^-}) + h^{3/2} \|u\|_{2, \infty, \Omega^+\cup\Omega^-}\right)^2.
\end{equation*}
Then, we estimate $F_4$ as
\begin{equation*}
\begin{split}
F_4 \le &C\|\nabla(I_{1,h}^{\ast}u_1- u_{1,h})\|_{0, \Omega_{1,h}\setminus\Omega_{1}}^2\\
\le & C\|\nabla(I_{1,h}^{\ast}u_1- u_{1,h})\|_{0,\Omega_{\Gamma,h}}^2\\
\le & C\|\nabla(I_{1,h}E_1u_1- u_{1,h})\|_{0,\Omega_{\Gamma,h}}^2\\
= &C\sum_{T\in \mathcal{T}_{\Gamma,h}}\|\nabla(I_{1,h}E_1u_1- u_{1,h})\|_{0, T}^2\\
\le &C\sum_{T\in \mathcal{T}_{\Gamma,h}}h^4|E_1u_1|_{2, \infty, T}\\
\le &  Ch^4|u|_{2,\infty, \Omega_1\cup\Omega_2}\sum_{T\in \mathcal{T}_{\Gamma,h}}1\\
\le & Ch^3|u|_{2,\infty, \Omega_1\cup\Omega_2},
\end{split}
\end{equation*}
where we have used the fact $ \sum_{T\in \mathcal{T}_{\Gamma,h}} 1 \approx \mathcal{O}(h^{-1})$.  Similarly, we have
\begin{equation*}
F_5 \le  Ch^3|u|_{2,\infty, \Omega_1\cup\Omega_2}.
\end{equation*}
Combining the estimates for $F_3$, $F_4$, and $F_5$, we have
\begin{equation*}
F_2 \le C\left(h^{1+\rho}(\|u\|_{3, \Omega^+\cup\Omega^-} + \|u\|_{2, \infty, \Omega^+\cup\Omega^-}) + h^{3/2} \|u\|_{2, \infty, \Omega^+\cup\Omega^-}\right),
\end{equation*}
which completes the proof.
\end{proof}

By the above superconvergence result,  we naturally define  a local {\it a posteriori} error estimator on an element $T\in \mathcal{T}_h$ :
\begin{equation}\label{eq:localind}
\eta_T =
    \|\beta^{1/2}(R_hu_h - \nabla u_h)\|_{0, T},
\end{equation}
and the corresponding global error estimator
\begin{equation}\label{eq:globalind}
\eta_h = \left( \sum_{T\in \mathcal{T}_h}\eta_T^2\right)^{1/2}.
\end{equation}

Theorem \ref{thm:super} implies the error estimator \eqref{eq:localind} (or \eqref{eq:globalind})  is   asymptotically exact for the  Nitsche's method:
\begin{theorem}\label{thm:asyexact}
 Assume the same hypothesis in Theorem \ref{thm:supercloseness} and let $u_h$ be the finite element  solution of  the discrete variational problem
 \eqref{eq:var}.  Further assume that there is a constant $C(u)>0$ such that
 \begin{equation}\label{equ:satassum}
 \|\tilde{\nabla}u - \nabla u_h\|_{0,\Omega} \ge C(u) h,
\end{equation}
then it holds that
\begin{equation}
\left | \frac{\eta_h}{\| \beta^{1/2}(\tilde{\nabla} u-\nabla u_h)\|_{0,\Omega}}  -1 \right | \le Ch^{\rho},
\end{equation}
where  $\tilde{\nabla} u = (\nabla u_1, \nabla u_2)$ and $\rho = \min(\alpha, \frac{\sigma}{2}, \frac{1}{2})$.
\end{theorem}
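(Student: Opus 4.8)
The plan is to treat this as a standard asymptotic-exactness consequence of the superconvergence bound in Theorem~\ref{thm:super}, combined with the saturation-type lower bound~\eqref{equ:satassum}. The central observation is that the recovered-gradient estimator $\eta_h = \|\beta^{1/2}(R_h u_h - \nabla u_h)\|_{0,\Omega}$ and the true error $\|\beta^{1/2}(\tilde{\nabla} u - \nabla u_h)\|_{0,\Omega}$ differ only by the superconvergent quantity $R_h u_h - \tilde{\nabla} u$. Writing $R_h u_h - \nabla u_h = (R_h u_h - \tilde{\nabla} u) + (\tilde{\nabla} u - \nabla u_h)$ and applying the reverse triangle inequality in the $\beta^{1/2}$-weighted $L_2$ norm, I would obtain
\[
\bigl|\,\eta_h - \|\beta^{1/2}(\tilde{\nabla} u - \nabla u_h)\|_{0,\Omega}\,\bigr| \le \|\beta^{1/2}(R_h u_h - \tilde{\nabla} u)\|_{0,\Omega}.
\]

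First I would control the right-hand side. Since $\beta$ is bounded above, $\|\beta^{1/2}\,\cdot\,\|_{0,\Omega} \le C\|\cdot\|_{0,\Omega_1\cup\Omega_2}$, so Theorem~\ref{thm:super} applies directly and gives
\[
\|\beta^{1/2}(R_h u_h - \tilde{\nabla} u)\|_{0,\Omega} \le C\bigl(h^{1+\rho}(\|u\|_{3,\Omega^+\cup\Omega^-} + \|u\|_{2,\infty,\Omega^+\cup\Omega^-}) + h^{3/2}\|u\|_{2,\infty,\Omega^+\cup\Omega^-}\bigr).
\]
Because $\rho = \min(\alpha,\tfrac{\sigma}{2},\tfrac12)\le\tfrac12$, we have $1+\rho\le\tfrac32$, so for $h\le 1$ the $h^{3/2}$ contribution is dominated by the $h^{1+\rho}$ contribution and the whole bound collapses to $C h^{1+\rho}$, with the constant absorbing the fixed Sobolev norms of $u$.

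Next I would insert the lower bound on the denominator. Using $\beta \ge \beta_0 > 0$ together with the saturation hypothesis~\eqref{equ:satassum} yields
\[
\|\beta^{1/2}(\tilde{\nabla} u - \nabla u_h)\|_{0,\Omega} \ge \sqrt{\beta_0}\,\|\tilde{\nabla} u - \nabla u_h\|_{0,\Omega} \ge \sqrt{\beta_0}\,C(u)\,h.
\]
Dividing the difference estimate above by this quantity and recognizing that
\[
\left|\frac{\eta_h}{\|\beta^{1/2}(\tilde{\nabla} u - \nabla u_h)\|_{0,\Omega}} - 1\right| = \frac{\bigl|\eta_h - \|\beta^{1/2}(\tilde{\nabla} u - \nabla u_h)\|_{0,\Omega}\bigr|}{\|\beta^{1/2}(\tilde{\nabla} u - \nabla u_h)\|_{0,\Omega}}
\]
gives the bound $\dfrac{Ch^{1+\rho}}{\sqrt{\beta_0}\,C(u)\,h} = C'h^{\rho}$, which is the claim.

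The derivation is essentially routine once Theorem~\ref{thm:super} is in hand; the points that need care are bookkeeping ones. I would make sure the weighted norm in $\eta_h$ (summed over $\mathcal{T}_h$, where each interface element contributes both components restricted to $T_1$ and $T_2$) is interpreted so that it matches the piecewise norm $\|\cdot\|_{0,\Omega_1\cup\Omega_2}$ of Theorem~\ref{thm:super}, allowing the triangle inequality to be applied componentwise. The genuine content sits in the saturation assumption~\eqref{equ:satassum}: without a lower bound of order $h$ on the true error one cannot convert the \emph{absolute} superconvergence estimate of order $h^{1+\rho}$ into a \emph{relative} error of order $h^{\rho}$, so the main obstacle is really the indispensability of that hypothesis rather than any individual step in the estimate.
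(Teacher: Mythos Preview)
Your proof is correct and follows essentially the same route as the paper: apply the reverse triangle inequality to compare $\eta_h$ with the true weighted error, bound the resulting term $\|\beta^{1/2}(R_hu_h-\tilde{\nabla}u)\|_{0,\Omega}$ by Theorem~\ref{thm:super}, and divide by the $O(h)$ lower bound from~\eqref{equ:satassum}. The paper's proof is simply a one-line version of this; your additional remarks about $\beta$ being bounded above and below and about $h^{3/2}\le h^{1+\rho}$ for $\rho\le\tfrac12$ are the natural details that the paper leaves implicit.
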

\begin{proof}
 By Theorem \ref{thm:super} and   \eqref{equ:satassum},   we have
 \begin{equation}
\left |  \frac{\eta_h}{\beta^{1/2}\|\tilde{\nabla}u - \nabla u_h\|_{0,\Omega}}  -1 \right |
\le \left |  \frac{ \|\beta^{1/2}(R_hu_h - \tilde{\nabla} u)\|_{0, \Omega}}{\|\beta^{1/2}(\tilde{\nabla}u - \nabla u_h)  \|_{0,\Omega}} \right|
 \le Ch^{\rho}.
\end{equation}
 \end{proof}

\begin{remark}
 For interface problems, there are two types of errors: the  error introduced by geometric discretization and the error introduced by the singularity of the solution.
 The first type of error can be predicted by the curvature of the interface \cite{Chen2014, Zheng2016}.  The error estimator  \eqref{eq:localind} or \eqref{eq:globalind} can be used to estimate the second type of error.
\end{remark}

\section{Numerical examples}
In this section, we show the performance of proposed unfitted polynomial preserving recovery (UPPR) method by several numerical examples with both  simple and complex interface geometries.
The computational domains of  all examples are chosen as  $\Omega=(-1,1)\times(-1,1)$.    For the first two numerical examples,  the uniform triangulations of $\Omega$ are obtained by dividing $\Omega$ into $N^2$ sub-squares and then dividing each sub-square into two right triangles.
The resulted uniform mesh size is $h=\frac{2}{N}$.
 For convenience, we use the following errors in all the examples:
\begin{equation*}
\begin{aligned}
& De:=\|\tilde{\nabla}u - \nabla u_h\|_{0,\Omega_1\cup\Omega_2},\quad D^ie:=\|\nabla u_I- \nabla u_h\|_{0,\Omega_1\cup\Omega_2},\\
& D^re:=\|\tilde{\nabla} u-R_hu_h\|_{0, \Omega_1\cup\Omega_2},
\end{aligned}
 \end{equation*}
with $\tilde{\nabla} u = (\nabla u_1, \nabla u_2)$.

{\bf Example 5.1.}   In this example, we  consider the  interface problem  \eqref{eq:interface}  with homogeneous
jump condition as in \cite{LiLinWu2003}.
The interface is a circular interface of radius $r_0 = 0.5$.
 The exact solution is
\begin{equation*}
u(x,y) =
\left\{
\begin{array}{ll}
    \frac{r^3}{\beta_1}   &  \text{if }   (x,y)\in \Omega_1, \\
      \frac{r^3}{\beta_2} + \left( \frac{1}{\beta^-}-\frac{1}{\beta^+} \right)r_0^3&  \text{if } (x,y)\in \Omega_2,\\
   \end{array}
\right.
\end{equation*}
where $r = \sqrt{x^2+y^2}$.

We consider the  following  four typical different
jump ratios:  $\beta_1/\beta_2 = 1/10$ (moderate jump), $\beta_1/\beta_2 = 1/1000$ (large jump), $\beta_1/\beta_2 = 1/100000$ (huge jump),
and $\beta_1/\beta_2 = 100000$ (huge jump).   The numerical errors are displayed in Tables \ref{tab:ex51a}-\ref{tab:ex51d}.
We observe an optimal convergence in the $H^1$-seminorm as predicted by  Theorem \ref{thm:optimal}.
The observed $\mathcal{O}(h^{1.5})$ supercloseness and superconvergence   confirm our theoretical results.
In addition, we observe the same superconvergence results in all different cases.
It means that the superconvergence results are independent of the jump ratio  of the coefficient.
In Figure \ref{fig:circle}, we plot the recovered gradient on the initial mesh.

\begin{table}[htb!]
\centering
\caption{Numerical results for Example 5.1 with $\beta_1=10, \beta_2=1$. }\label{tab:ex51a}
 \begin{tabular}{|c|c|c|c|c|c|c|c|}
\hline
 $h$ & $De$ & order& $D^{i}e$ & order& $D^{r}_re$ & order\\ \hline\hline
 1/16 &4.61e-02&--&2.37e-02&--&1.82e-02&--\\ \hline
 1/32 &2.34e-02&0.98&9.34e-03&1.34&7.70e-03&1.25\\ \hline
 1/64 &1.17e-02&1.00&3.28e-03&1.51&2.75e-03&1.48\\ \hline
 1/128 &5.88e-03&1.00&1.17e-03&1.48&9.95e-04&1.47\\ \hline
 1/256 &2.94e-03&1.00&4.08e-04&1.52&3.36e-04&1.56\\ \hline
 1/512 &1.47e-03&1.00&1.43e-04&1.51&1.17e-04&1.53\\ \hline
 1/1024 &7.35e-04&1.00&5.08e-05&1.49&4.17e-05&1.48\\ \hline
\end{tabular}
\end{table}

\begin{table}[htb!]
\centering
\caption{Numerical results for Example 5.1 with $\beta_1=1000, \beta_2=1$. }\label{tab:ex51b}
 \begin{tabular}{|c|c|c|c|c|c|c|c|}
\hline
 $h$ & $De$ & order& $D^{i}e$ & order& $D^{r}_re$ & order\\ \hline\hline
 1/16 &4.19e-02&--&2.62e-02&--&2.15e-02&--\\ \hline
 1/32 &2.13e-02&0.98&9.98e-03&1.39&8.52e-03&1.33\\ \hline
 1/64 &1.06e-02&1.00&3.53e-03&1.50&3.09e-03&1.46\\ \hline
 1/128 &5.33e-03&1.00&1.25e-03&1.50&1.12e-03&1.47\\ \hline
 1/256 &2.66e-03&1.00&4.33e-04&1.52&3.75e-04&1.57\\ \hline
 1/512 &1.33e-03&1.00&1.52e-04&1.51&1.29e-04&1.54\\ \hline
 1/1024 &6.66e-04&1.00&5.41e-05&1.49&4.59e-05&1.49\\ \hline
\end{tabular}
\end{table}

\begin{table}[htb!]
\centering
\caption{Numerical results for Example 5.1 with $\beta_1=1, \beta_2=100000$. }\label{tab:ex51c}
\begin{tabular}{|c|c|c|c|c|c|c|c|}
\hline
 $h$ & $De$ & order& $D^{i}e$ & order& $D^{r}_re$ & order\\ \hline\hline
 1/16 &1.99e-01&--&2.95e-02&--&3.23e-02&--\\ \hline
 1/32 &9.97e-02&1.00&9.94e-03&1.57&1.06e-02&1.61\\ \hline
 1/64 &4.98e-02&1.00&3.53e-03&1.50&3.08e-03&1.78\\ \hline
 1/128 &2.49e-02&1.00&1.19e-03&1.56&1.05e-03&1.55\\ \hline
 1/256 &1.25e-02&1.00&4.33e-04&1.46&3.85e-04&1.45\\ \hline
 1/512 &6.23e-03&1.00&1.56e-04&1.47&1.38e-04&1.48\\ \hline
 1/1024 &3.12e-03&1.00&5.51e-05&1.50&4.85e-05&1.51\\ \hline
\end{tabular}
\end{table}

\begin{table}[htb!]
\centering
\caption{Numerical results for Example 5.1 with $\beta_1=100000, \beta_2=1$. }\label{tab:ex51d}
\begin{tabular}{|c|c|c|c|c|c|c|c|}
\hline
 $h$ & $De$ & order& $D^{i}e$ & order& $D^{r}_re$ & order\\ \hline\hline
 1/16 &4.19e-02&--&2.62e-02&--&2.15e-02&--\\ \hline
 1/32 &2.13e-02&0.98&9.99e-03&1.39&8.54e-03&1.33\\ \hline
 1/64 &1.06e-02&1.00&3.54e-03&1.50&3.10e-03&1.46\\ \hline
 1/128 &5.33e-03&1.00&1.25e-03&1.50&1.12e-03&1.46\\ \hline
 1/256 &2.66e-03&1.00&4.33e-04&1.52&3.84e-04&1.55\\ \hline
 1/512 &1.33e-03&1.00&1.52e-04&1.51&1.37e-04&1.48\\ \hline
 1/1024 &6.66e-04&1.00&5.41e-05&1.49&4.65e-05&1.56\\ \hline
\end{tabular}
\end{table}

\begin{figure}[h b p]
   \centering
     \subcaptionbox{\label{fig:circle_rx}}
  {\includegraphics[width=0.45\textwidth]{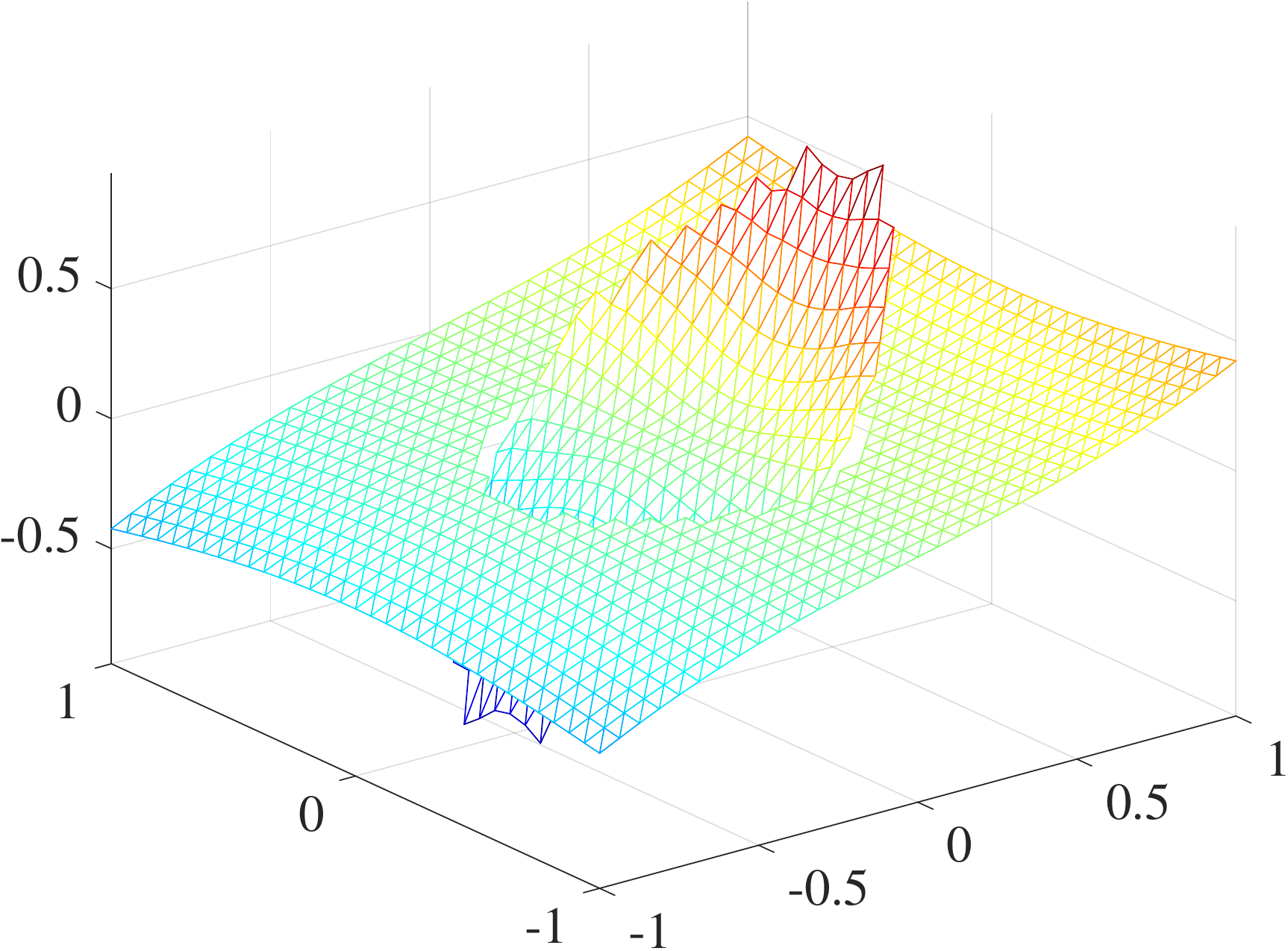}}
  \subcaptionbox{\label{fig:circle_ry}}
   {\includegraphics[width=0.45\textwidth]{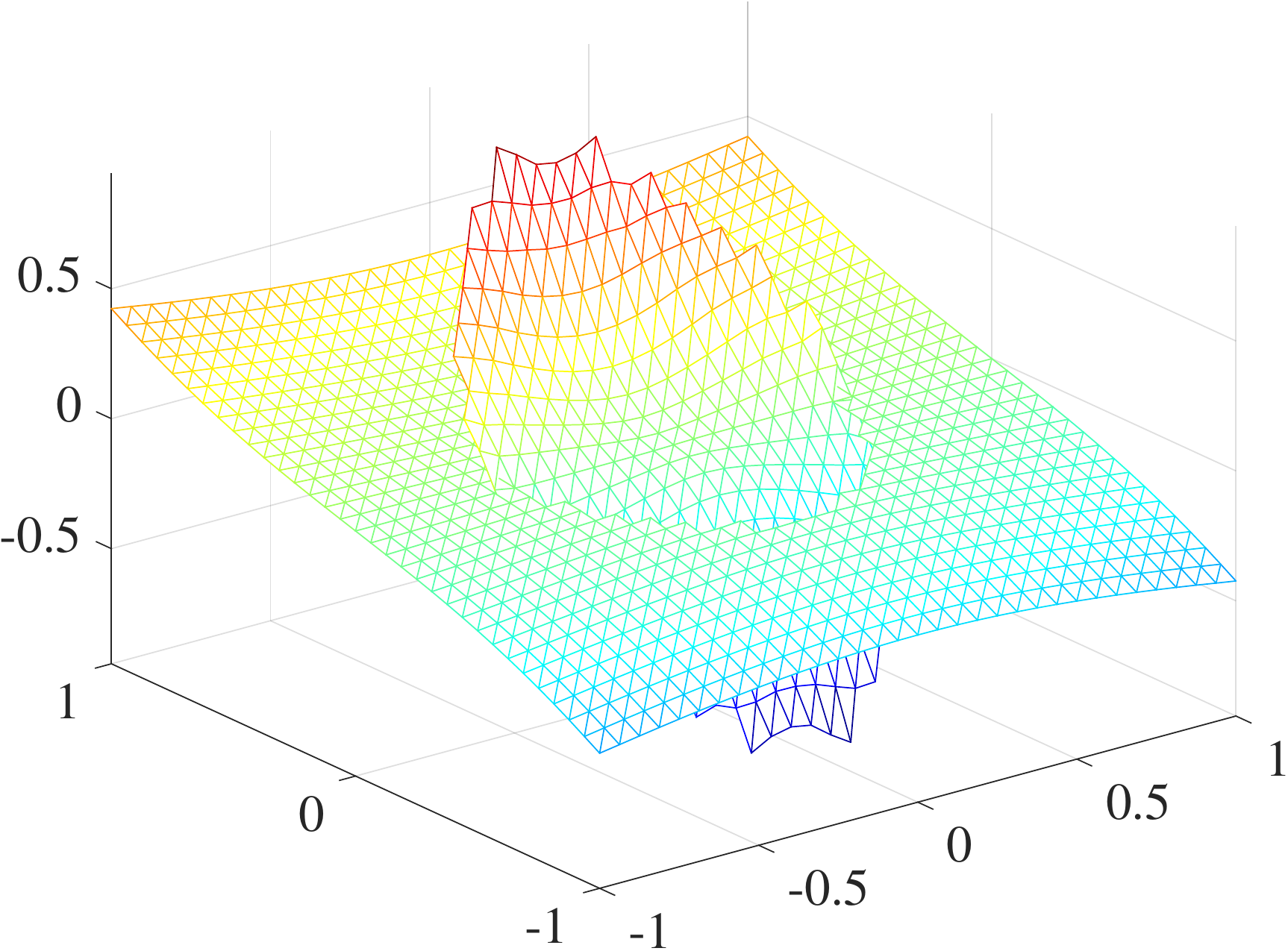}}
   \caption{Plots of recovered gradient for Example 1. (a): $x$-component ; (b): $y$-component.}\label{fig:circle}
\end{figure}

{\bf Example 5.2.}  In this example,  we consider the flower-shape interface problem with non-homogeneous jump conditions as studied in \cite{MuWang2013, ZhouWei2006}.
The interface curve $\Gamma$ in polar coordinates is given by
\begin{equation*}
 r = \frac{1}{2} + \frac{\sin(5\theta)}{7}.
\end{equation*}
It contains both convex and concave parts, as demonstrated  in Figure \ref{fig:flower_lst}.
The diffusion coefficient is piecewise constant with $\beta_1=1$ and $\beta_2=10$.   The  right-hand side function $f$ in \eqref{eq:model} is chosen to match the exact solution
\begin{equation*}
u(x,y) =
\left\{
\begin{array}{ll}
    e^{(x^2+y^2)}, & \text{if } (x,y)\in \Omega_1\\
    0.1(x^2+y^2)^2-0.01\ln(2\sqrt{x^2+y^2}),&  \text{if } (x,y)\in \Omega_2,\\
   \end{array}
\right.
\end{equation*}
and the jump conditions  \eqref{eq:valuejump}-\eqref{eq:fluxjump} are  provided by the exact solution.

In Figure \ref{fig:flower_sol},  we plot the numerical solution on the initial mesh which clearly indicates the non-homogeneous jump in function value.
We show the numerical results in Table \ref{tab:ex52}.  As expected, we observe the first-order convergence for the gradient of finite element solution.
For the recovered gradient, $\mathcal{O}(h^{1.5})$ convergence is observed, which is in agreement with Theorem  \ref{thm:super}.
The recovered gradient on the initial mesh is visualized in Figure \ref{fig:flower}.

\begin{table}[htb!]
\centering
\caption{Numerical results for Example 5.2. }\label{tab:ex52}
\begin{tabular}{|c|c|c|c|c|c|c|c|}
\hline
 $h$ & $De$ & order& $D^{i}e$ & order& $D^{r}_re$ & order\\ \hline\hline
 1/16 &8.86e-02&--&5.81e-02&--&3.74e-02&--\\ \hline
 1/32 &3.90e-02&1.19&1.50e-02&1.95&1.19e-02&1.65\\ \hline
 1/64 &1.90e-02&1.04&4.37e-03&1.78&3.57e-03&1.74\\ \hline
 1/128 &9.48e-03&1.00&1.57e-03&1.48&1.29e-03&1.47\\ \hline
 1/256 &4.74e-03&1.00&5.63e-04&1.48&4.72e-04&1.45\\ \hline
 1/512 &2.37e-03&1.00&2.00e-04&1.50&1.71e-04&1.47\\ \hline
 1/1024 &1.18e-03&1.00&7.06e-05&1.50&6.62e-05&1.37\\ \hline
\end{tabular}
\end{table}

\begin{figure}
   \centering
   \subcaptionbox{\label{fig:flower_lst}}
  {\includegraphics[width=0.4\textwidth]{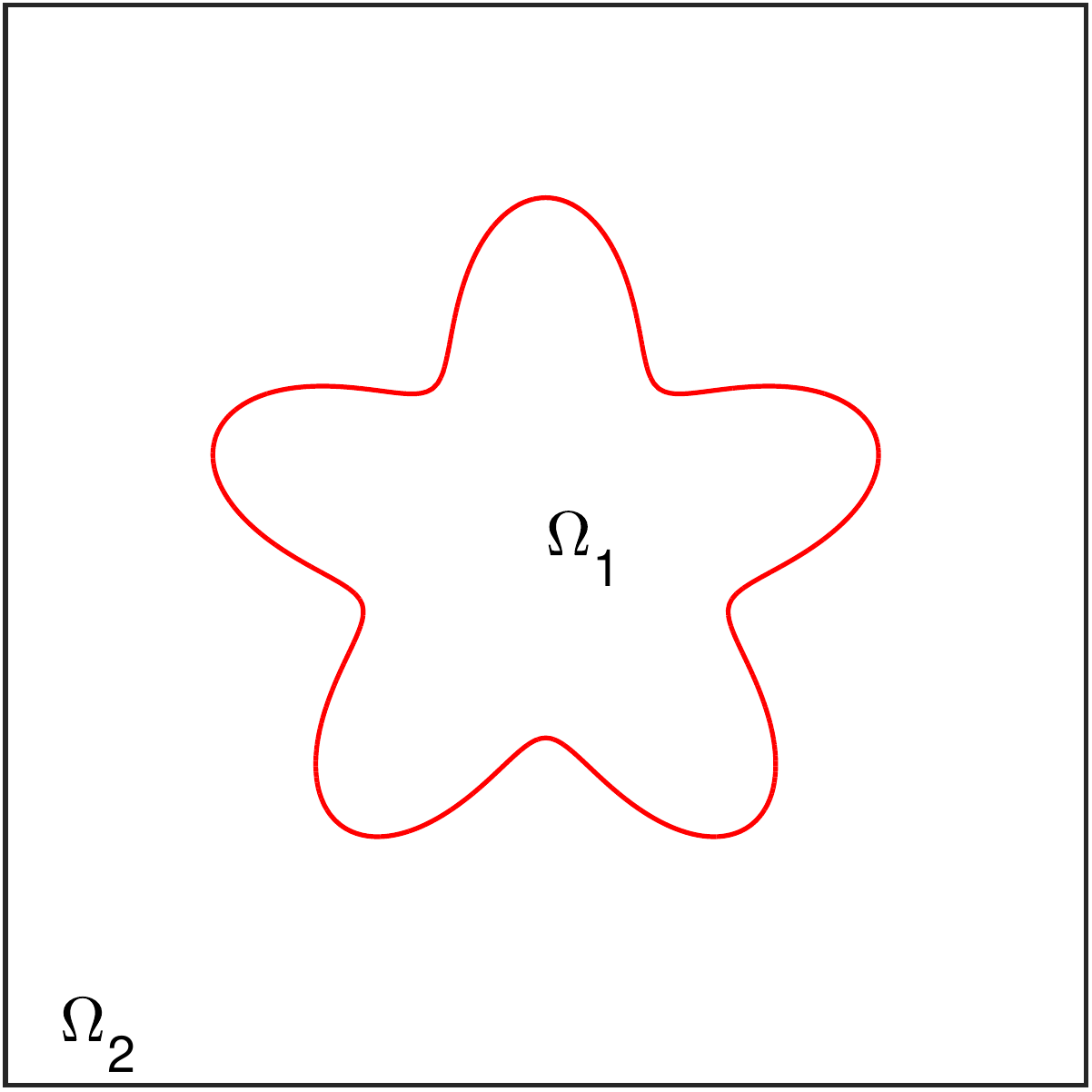}}
  \subcaptionbox{\label{fig:flower_sol}}
   {\includegraphics[width=0.55\textwidth]{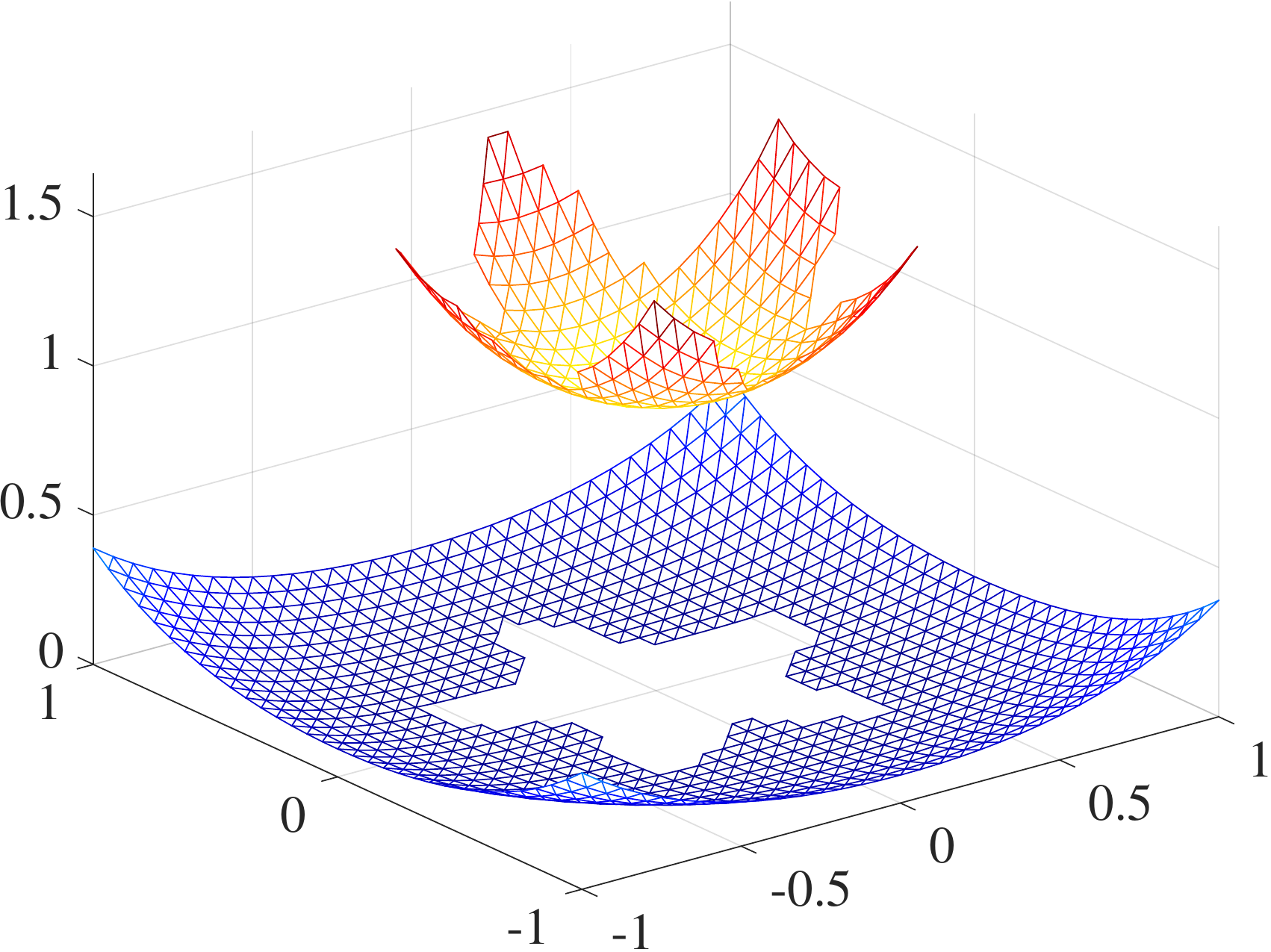}}
   \caption{Plots for Example 4. (a): Plot of the interface; (b): Plot of numerical solution.}
\end{figure}

\begin{figure}
   \centering
     \subcaptionbox{\label{fig:flower_rx}}
  {\includegraphics[width=0.47\textwidth]{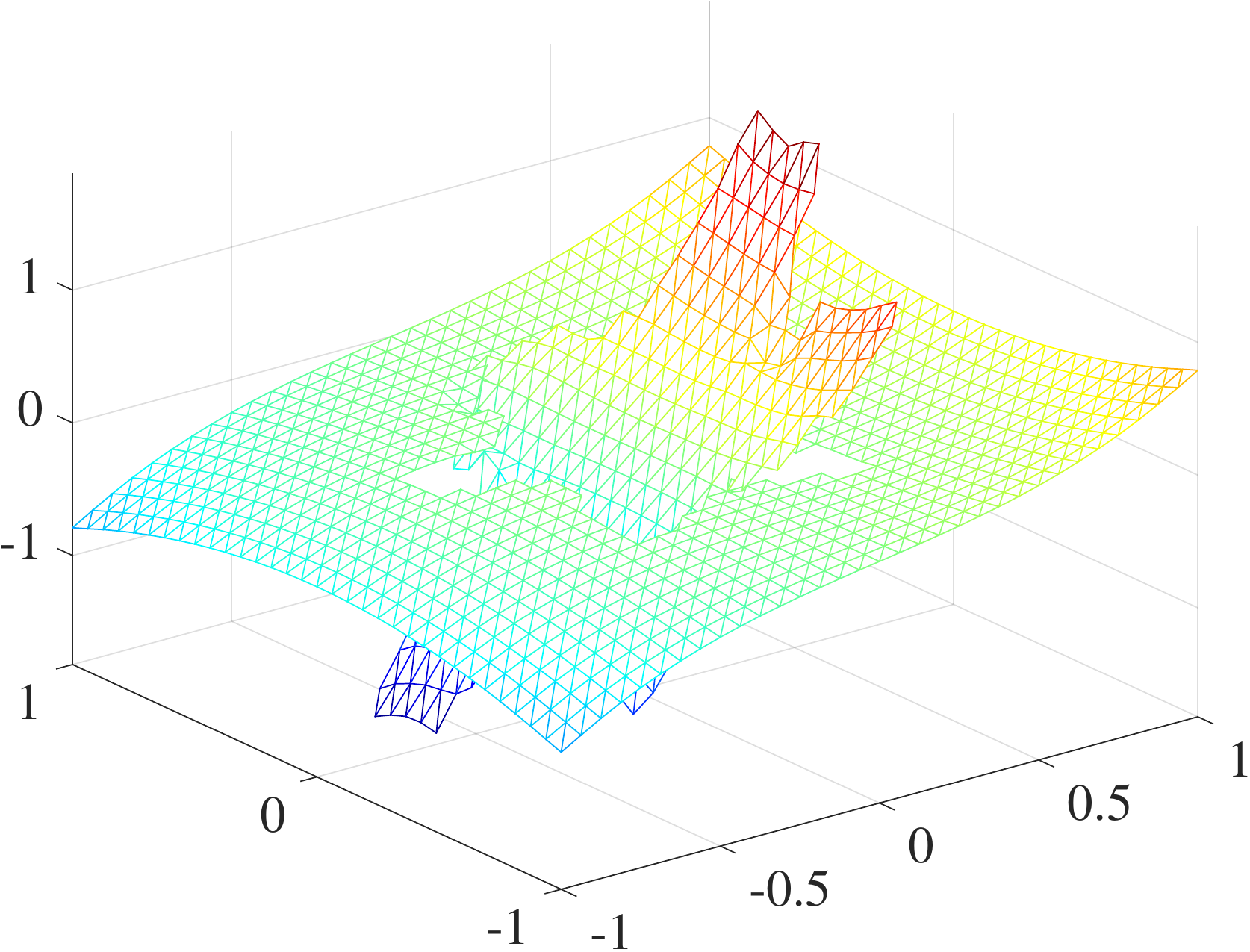}}
  \subcaptionbox{\label{fig:flower_ry}}
   {\includegraphics[width=0.47\textwidth]{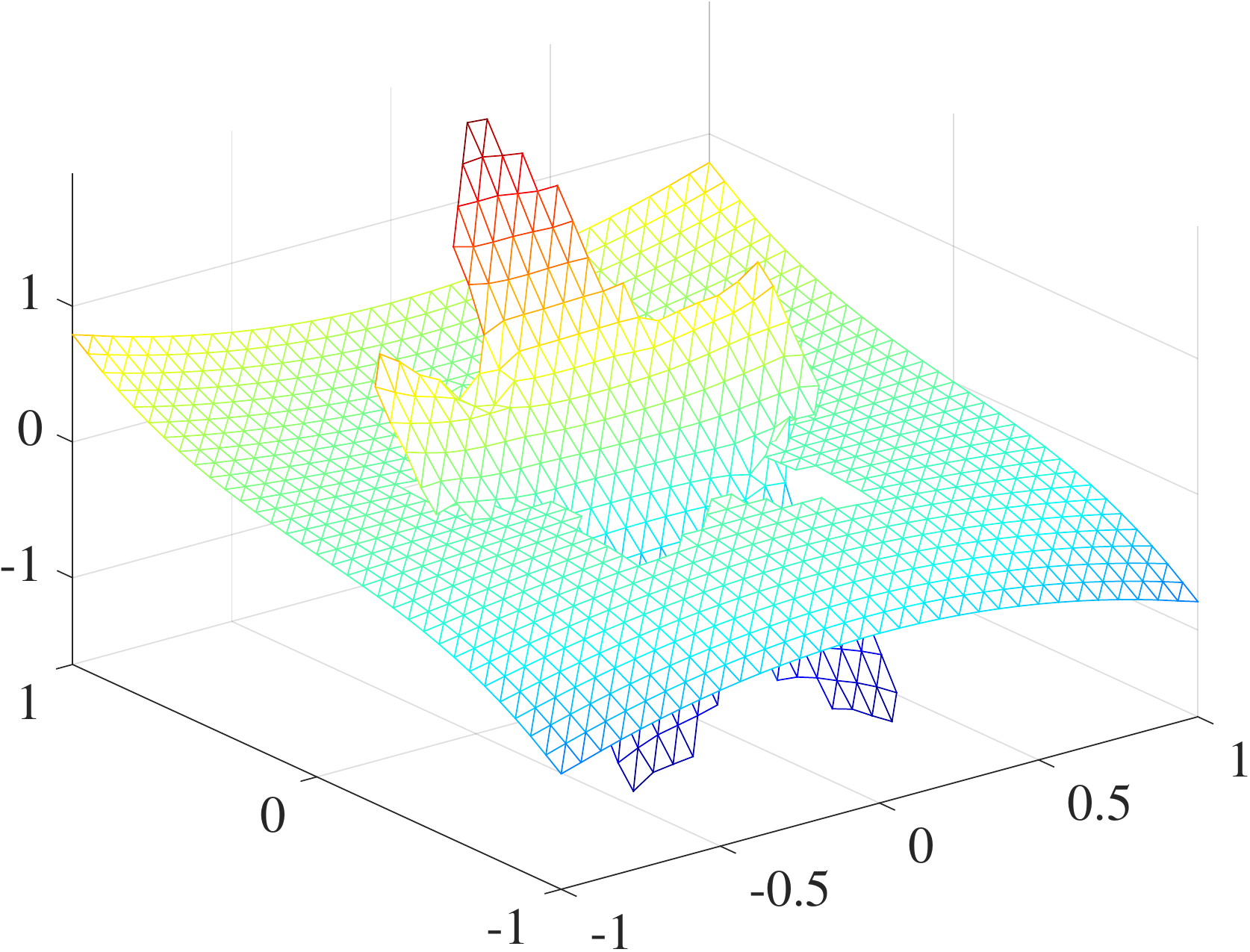}}
   \caption{Plots of recovered gradient for Example 2 on the initial mesh. (a): $x$-component ; (b): $y$-component.}\label{fig:flower}
\end{figure}

{\bf Example 5.3.}  In this example, we consider the interface problem with complex geometrical structure as in \cite{MuWang2015}.  The interface in polar coordinates  is given by
\begin{equation*}
r = 0.40178(1+\cos(2\theta)\sin(6\theta))\cos(\theta).
\end{equation*}
The interface and subdomains are plotted in Figure \ref{fig:flower_lst}.
The coefficient function is
\[
\beta(x,y) =
\left\{
\begin{array}{ll}
(x^2-y^2-7)/7,& (x,y)\in \Omega_1,\\
(xy+2)/5,& (x,y)\in \Omega_2;
\end{array}
\right.
\]
and the exact function is
\[
u(x,y) =
\left\{
\begin{array}{ll}
\sin(x+y)+\cos(x+y)+1,& (x,y)\in \Omega_1,\\
x+y+1,&(x,y)\in \Omega_2.
\end{array}
\right.
\]

As plotted in Figure \ref{fig:linmu_lst}, the interface contains complex geometrical structure.
To guarantee the Assumption \ref{ass:inter}, we need an extremely fine mesh.  It would increase the computational cost.
To reduce the computational cost, we propose an adaptive strategy   to generate  an initial unfitted mesh.
Here we use the curvature-based {\it a posterior} estimator to guide the refinement of the mesh as in \cite{Chen2014}.
Different from the mesh generated in \cite{Chen2014}, the resulted mesh is an unfitted mesh and all triangles are
perfect right triangles.

Figure \ref{fig:linmu_msh} plots the generated initial unfitted mesh. It is easy to see that the mesh is refined around the part of the interface with
high curvature.  The other four levels of unfitted meshes are obtained by uniform refinement.  The numerical results are summarized in Table \ref{tab:ex53}.
Note that in Table \ref{tab:ex53}, convergence rates are listed  with respect to the degree of freedom (DOF).
  The
corresponding  convergent rates
with respect to the mesh size $h$ are double of what we present in Table \ref{tab:ex53}.  The gradient of finite element solution converges to
the exact gradient at the rate of $\mathcal{O}(h)$ while the recovered gradient superconverges  at the rate of $\mathcal{O}(h^{1.5})$.
Additionally, the predicted supercloseness is observed in the numerical experiment.

\begin{figure}
   \centering
     \subcaptionbox{\label{fig:linmu_lst}}
  {\includegraphics[width=0.45\textwidth]{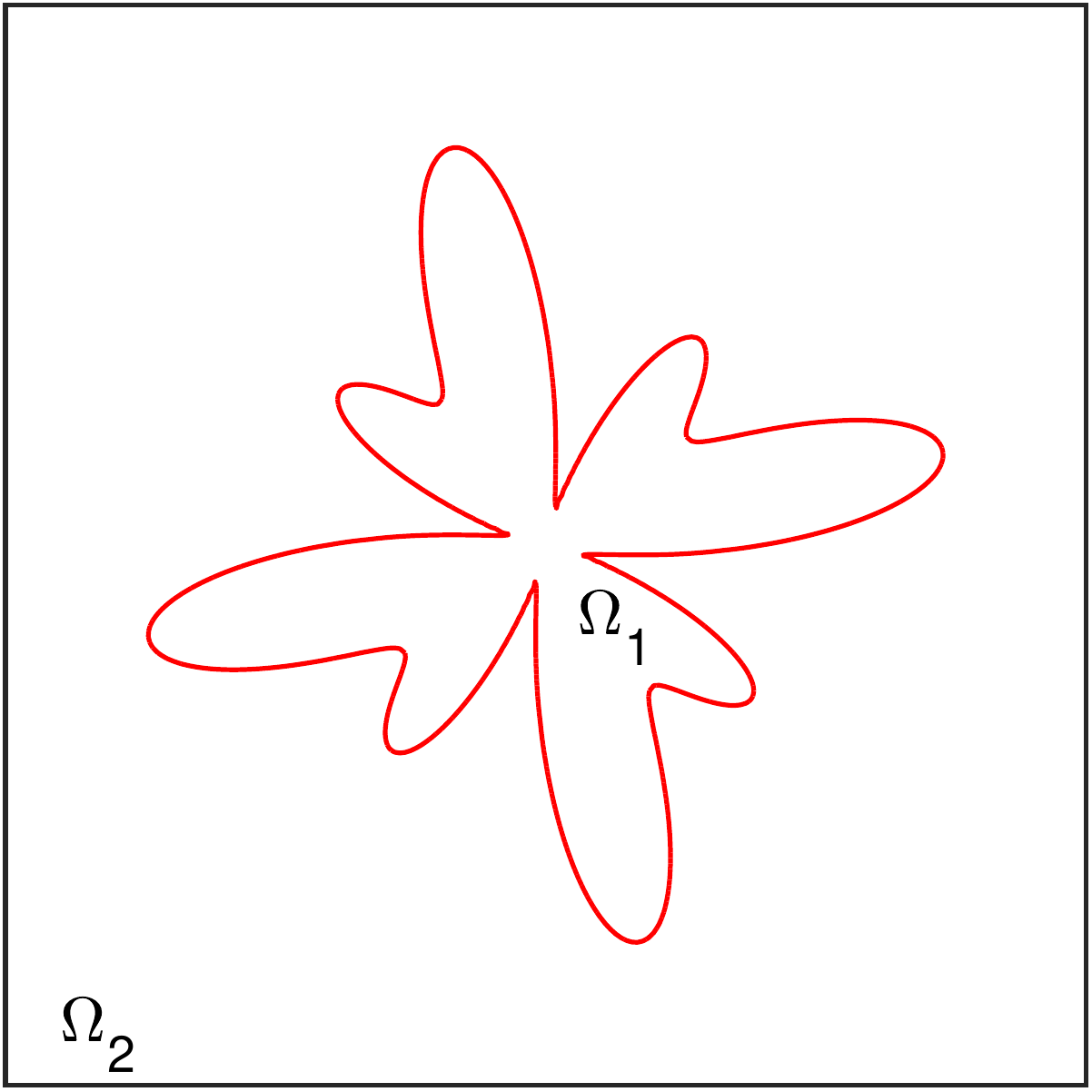}}
  \subcaptionbox{\label{fig:linmu_msh}}
   {\includegraphics[width=0.45\textwidth]{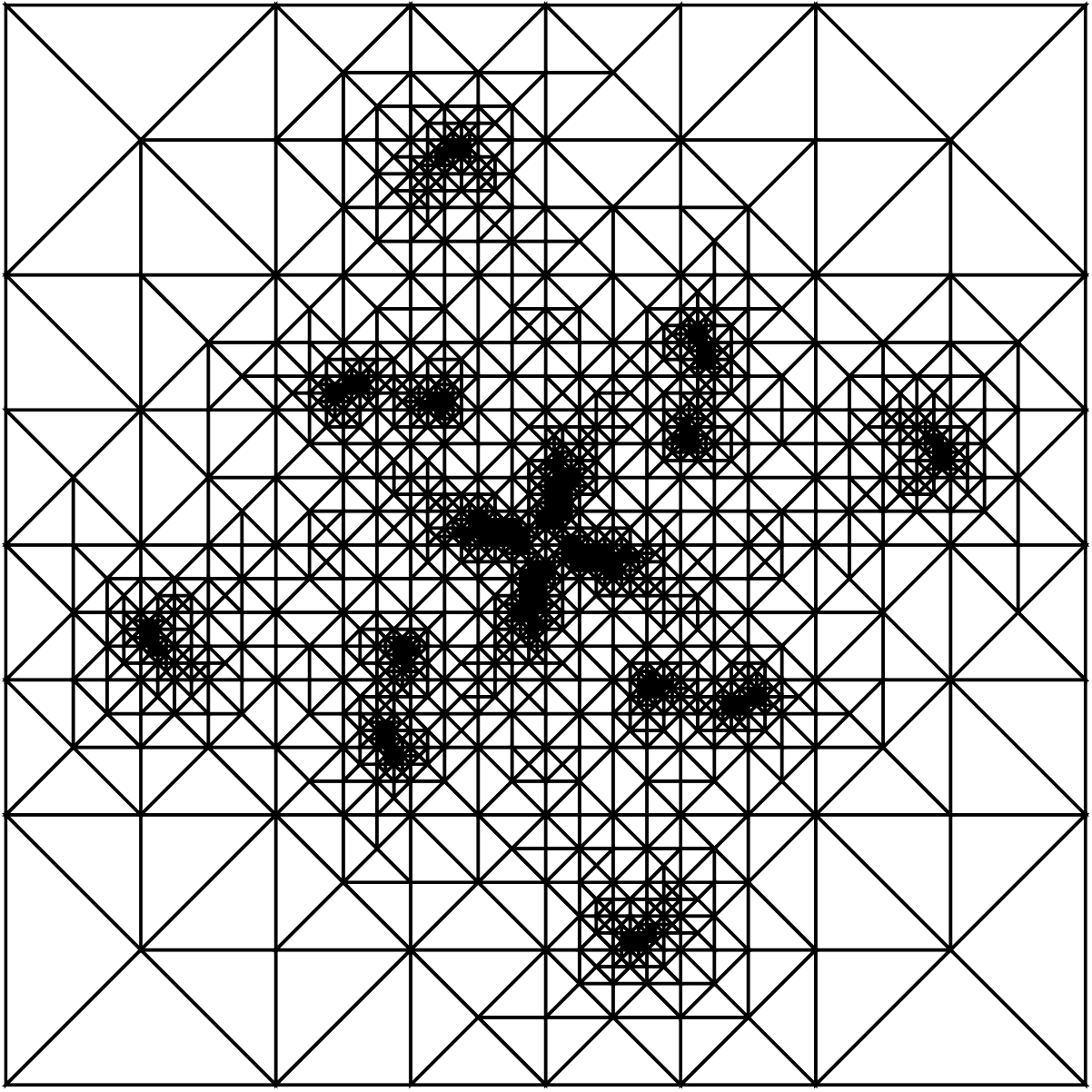}}
   \caption{Plots for Example 3. (a): Plot of the interface; (b): Initial unfitted mesh;}
\end{figure}

\begin{table}[htb!]
\centering
\caption{Numerical results for Example 5.3. }\label{tab:ex53}
\begin{tabular}{|c|c|c|c|c|c|c|c|}
\hline
DOF & $De$ & order& $D^{i}e$ & order& $D^{r}_re$ & order\\ \hline\hline
 2573 &2.49e-02&--&1.32e-02&--&8.68e-03&--\\ \hline
 10265 &1.29e-02&0.48&4.42e-03&0.79&3.20e-03&0.72\\ \hline
 41009 &6.57e-03&0.49&1.56e-03&0.75&1.25e-03&0.68\\ \hline
 163937 &3.30e-03&0.50&5.09e-04&0.81&4.46e-04&0.74\\ \hline
 655553 &1.66e-03&0.50&1.74e-04&0.77&1.59e-04&0.74\\ \hline
 2621825 &8.28e-04&0.50&5.92e-05&0.78&5.48e-05&0.77\\ \hline
\end{tabular}
\end{table}

{\bf Example 5.4.}  In this example, we consider the interface problem as in \cite{Teran2010,Chen2014}.  The interface  in parametric
form is given by
\[
\left\{
\begin{array}{ll}
 x(t) = r(t) \cos(\theta(t)),\\
  y(t) = r(t) \sin(\theta(t));
\end{array}
\right.
\]
where
\begin{equation*}
\theta(t) = t+\sin(4t), \quad r(t) = 0.60125+0.24012\cos(4t+\pi/2).
\end{equation*}
The coefficient function is
\[
\beta(x,y) =
\left\{
\begin{array}{ll}
4+\sin(x+y),& (x,y)\in \Omega_1,\\
10+x^2+y^2,& (x,y)\in \Omega_2;
\end{array}
\right.
\]
and the exact solution is
\[
u(x,y) =
\left\{
\begin{array}{ll}
\sin(x)\cos(y),& (x,y)\in \Omega_1,\\
1-x^2-y^2,&(x,y)\in \Omega_2.
\end{array}
\right.
\]

\begin{figure}
   \centering
     \subcaptionbox{\label{fig:teran_lst}}
  {\includegraphics[width=0.45\textwidth]{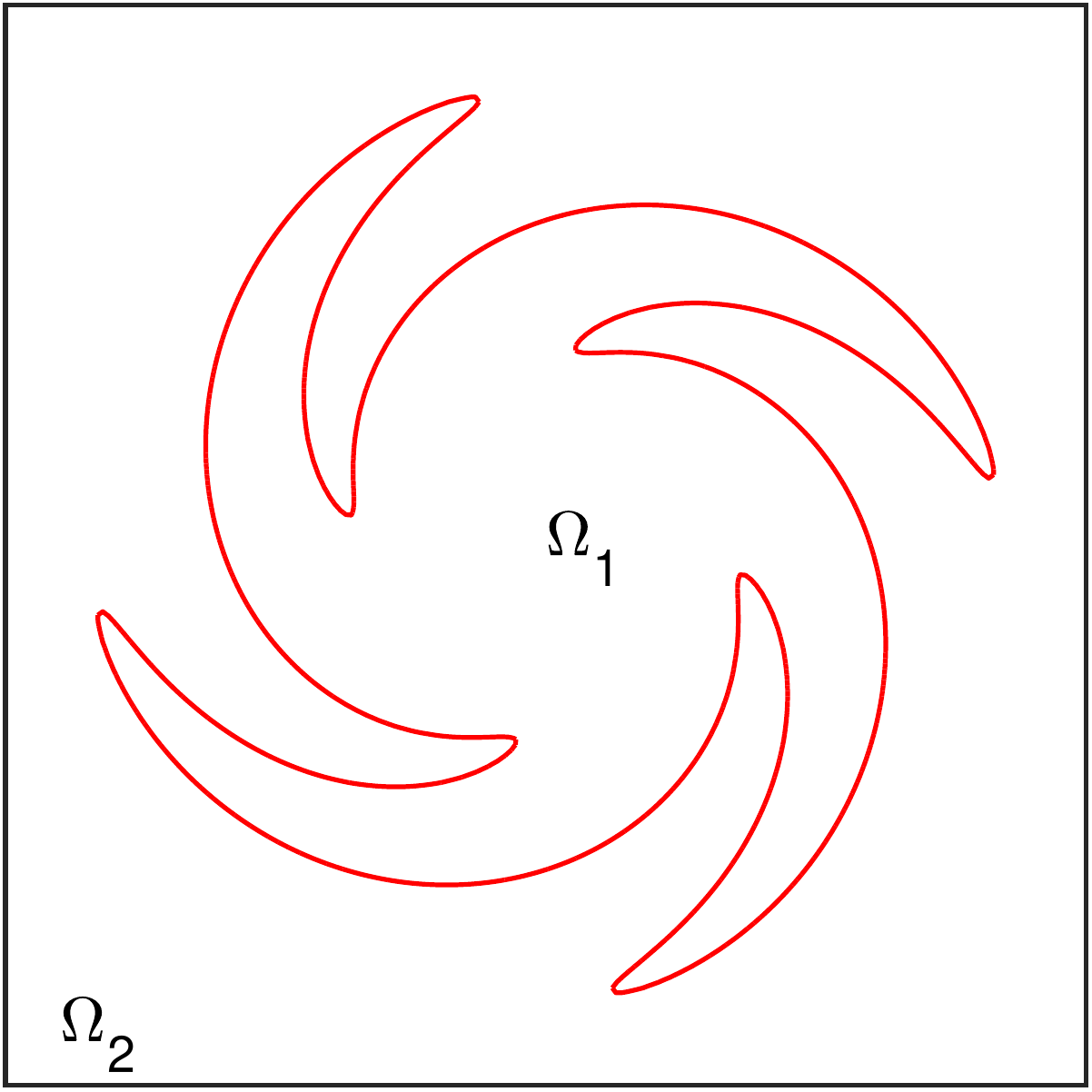}}
  \subcaptionbox{\label{fig:teran_msh}}
   {\includegraphics[width=0.45\textwidth]{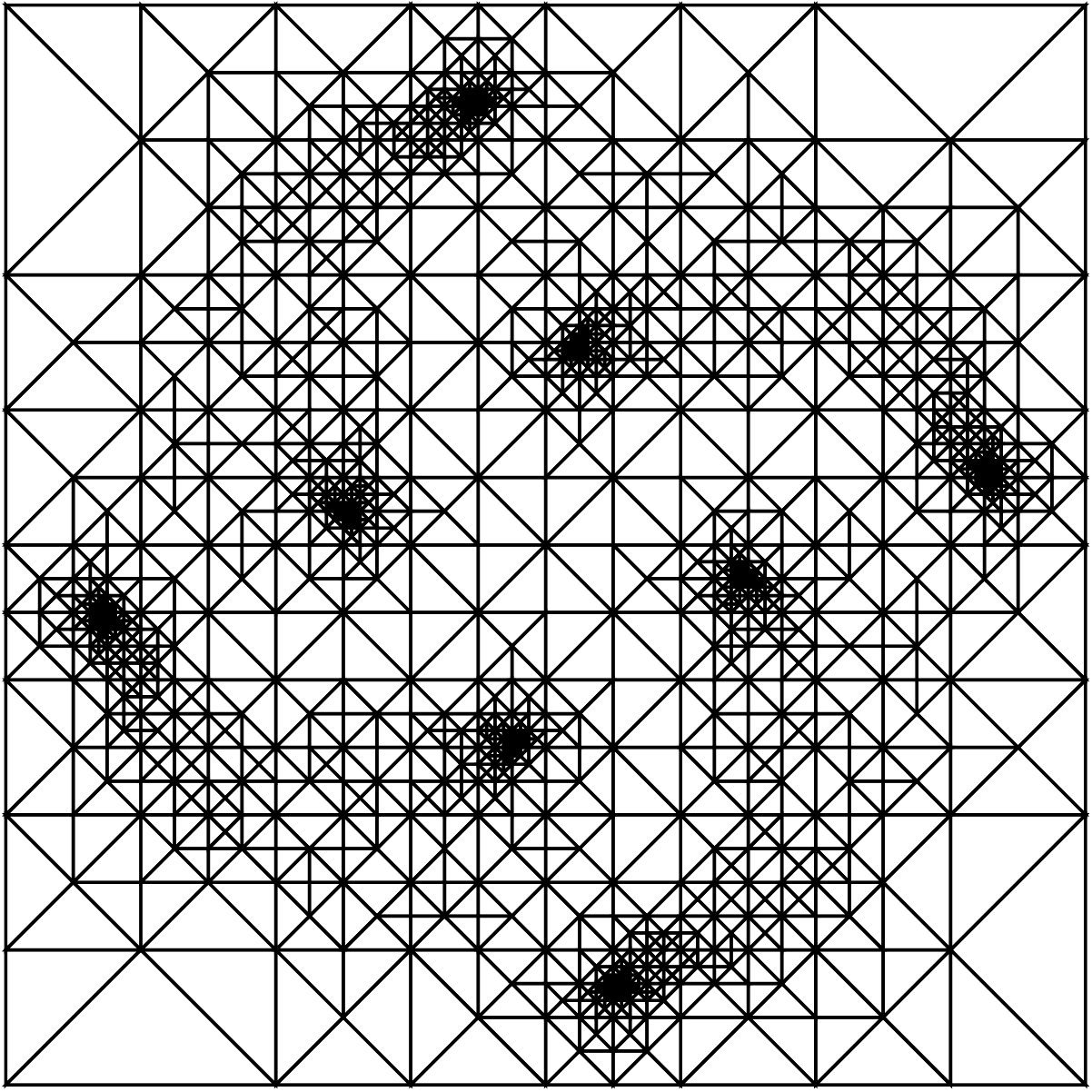}}
   \caption{Plots for Example 4. (a): Plot of the interface; (b): Initial unfitted mesh;}
\end{figure}

\begin{table}[htb!]
\centering
\caption{Numerical results for Example 5.4. }\label{tab:ex54}
\begin{tabular}{|c|c|c|c|c|c|c|c|}
\hline
 DOF & $De$ & order& $D^{i}e$ & order& $D^{r}_re$ & order\\ \hline\hline
 1381 &2.14e-01&--&9.04e-02&--&8.01e-02&--\\ \hline
 5489 &1.12e-01&0.47&2.51e-02&0.93&2.02e-02&1.00\\ \hline
 21889 &5.67e-02&0.49&7.60e-03&0.86&6.65e-03&0.80\\ \hline
 87425 &2.85e-02&0.50&2.32e-03&0.86&2.11e-03&0.83\\ \hline
 349441 &1.42e-02&0.50&7.29e-04&0.83&6.99e-04&0.80\\ \hline
 1397249 &7.12e-03&0.50&2.43e-04&0.79&2.39e-04&0.78\\ \hline
\end{tabular}
\end{table}

The interface $\Gamma$, shown in Figure \ref{fig:teran_lst}, contains complex geometrical structure. We use the same algorithm as in Example 5.3 to generate an initial unfitted mesh which is
plotted in Figure \ref{fig:teran_msh}.  Table \ref{tab:ex54} lists the numerical results. Clearly, we observe the desired optimal convergence and superconvergence rates.

{\bf Example 5.5.}  In this example, we consider the interface problem as in \cite{Li1998a, Chen2014}.
The interface $\Gamma$ in parametric form is defined by
\[
\left\{
\begin{array}{ll}
 x(t) = r(\theta) \cos(\theta)+x_c,\\
  y(t) = r(\theta) \sin(\theta)+y_c;
\end{array}
\right.
\]
where $r(\theta) = r_0+r_1\sin(\omega\theta)$, $0\le\theta<2\pi$.

In this test, we take $r_0=0.4$, $r_1=0.2$, $\omega=20$, and $x_c=y_c=0.02\sqrt{5}$.
The coefficient $\beta$ is a piecewise constant  with $\beta_1 = 1$ and $\beta_2 = 10$.
The exact function is
\[
u(x,y) =
\left\{
\begin{array}{ll}
r^2/\beta_1,& (x,y)\in \Omega_1,\\
(r^4-0.1\log(2r))/\beta_2,&(x,y)\in \Omega_2.
\end{array}
\right.
\]

\begin{figure}
   \centering
     \subcaptionbox{\label{fig:chenlong_lst}}
  {\includegraphics[width=0.45\textwidth]{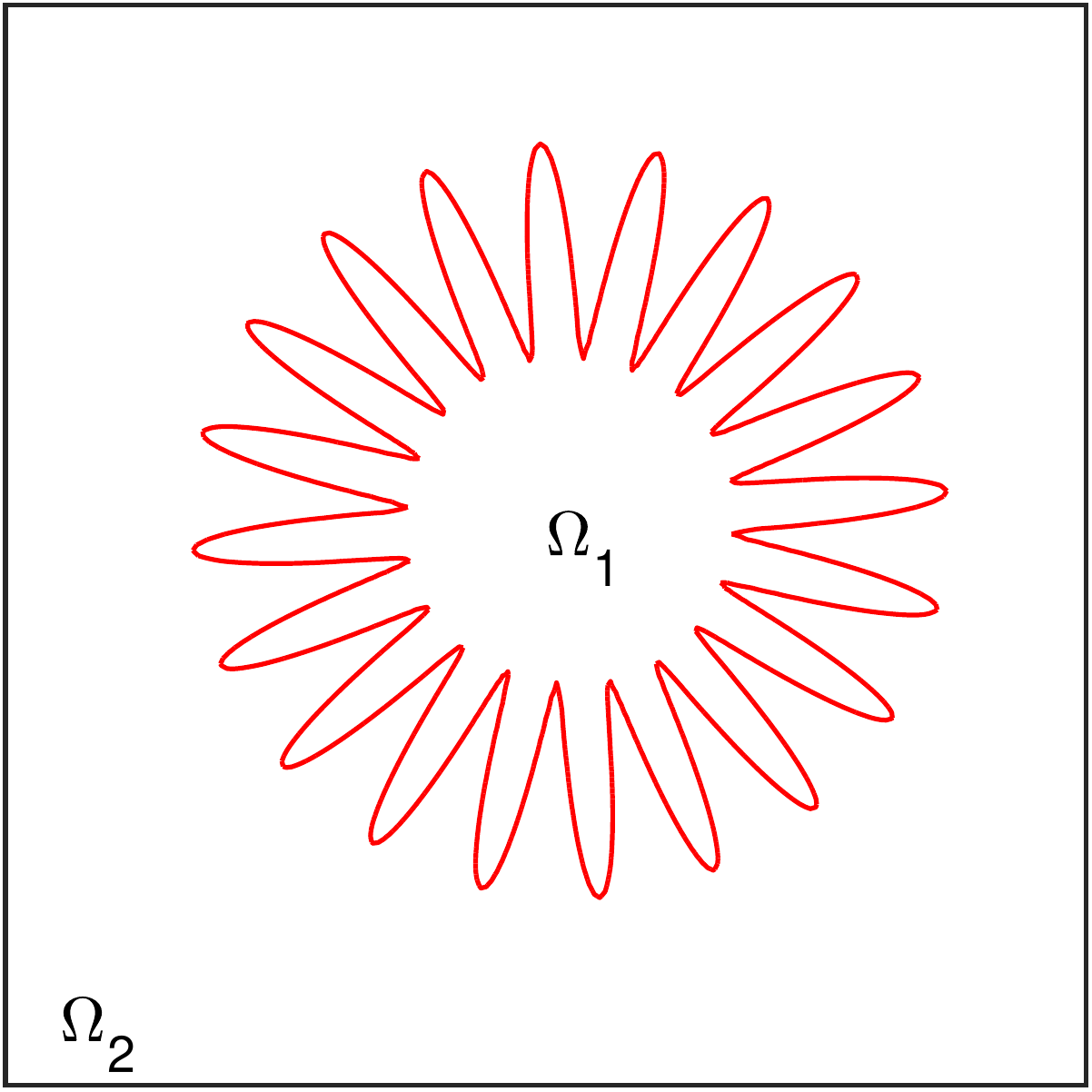}}
  \subcaptionbox{\label{fig:chenlong_msh}}
   {\includegraphics[width=0.45\textwidth]{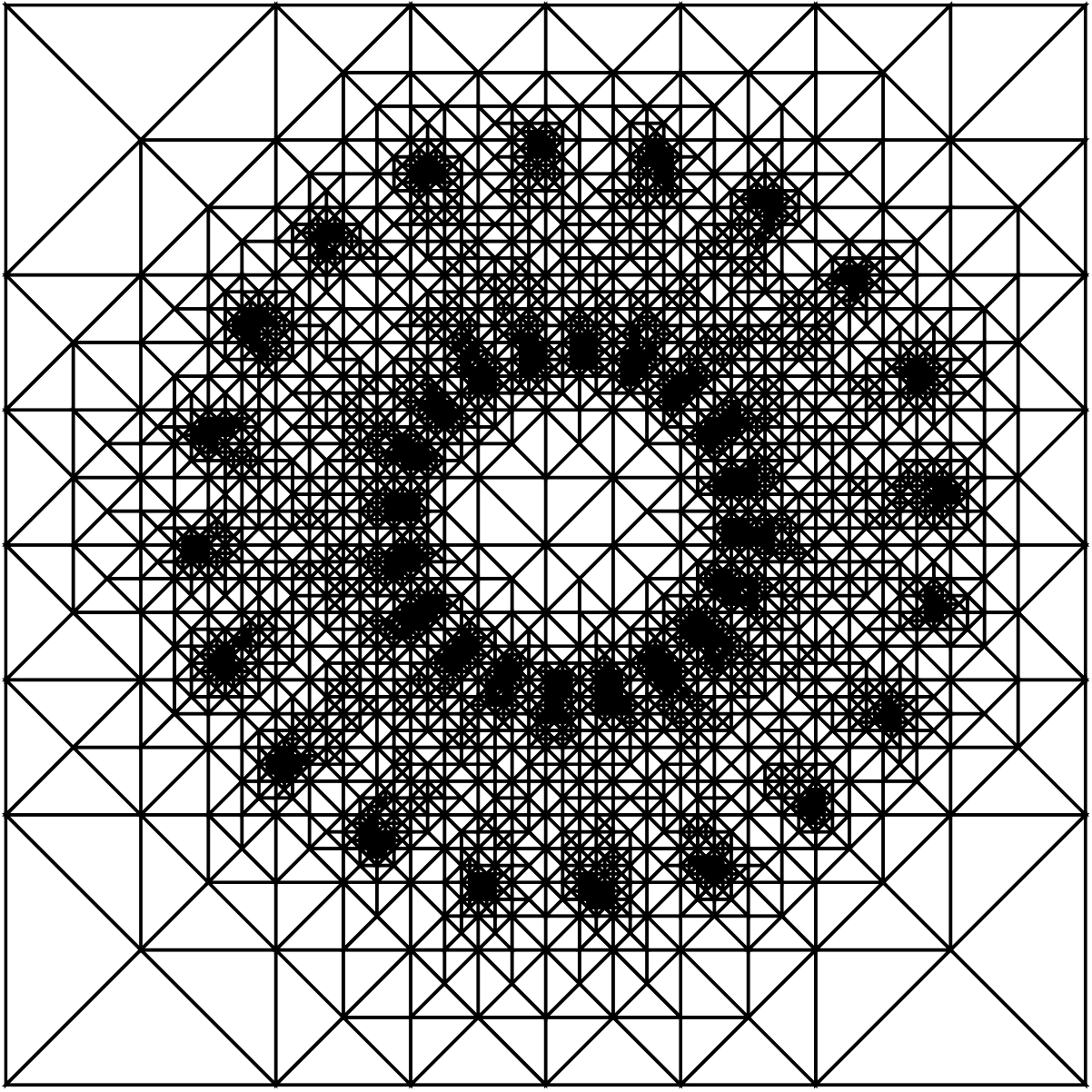}}
   \caption{Plots for Example 5. (a): Plot of the interface; (b): Initial unfitted mesh;}
\end{figure}

\begin{table}[htb!]
\centering
\caption{Numerical results for Example 5.5. }\label{tab:ex55}
\begin{tabular}{|c|c|c|c|c|c|c|c|}
\hline
 DOF & $De$ & order& $D^{i}e$ & order& $D^{r}_re$ & order\\ \hline\hline
 6514 &1.30e-01&--&5.36e-02&--&4.39e-02&--\\ \hline
 26027 &6.70e-02&0.48&1.53e-02&0.91&1.58e-02&0.74\\ \hline
 104053 &3.39e-02&0.49&4.23e-03&0.93&4.74e-03&0.87\\ \hline
 416105 &1.70e-02&0.50&1.17e-03&0.93&1.32e-03&0.92\\ \hline
 1664209 &8.51e-03&0.50&3.26e-04&0.92&3.72e-04&0.91\\ \hline
 6656417 &4.25e-03&0.50&9.26e-05&0.91&9.97e-05&0.95\\ \hline
\end{tabular}
\end{table}

The interface $\Gamma$ is plotted in Figure  \ref{fig:chenlong_lst} and the adaptively refined initial mesh is shown in Figure \ref{fig:chenlong_msh}.
The numerical results are given in Table \ref{tab:ex55}.  The observed results confirm the first-order convergence rate as predicted by Theorem \ref{thm:optimal}.
For the errors $De^i$ and $De^r$,   $\mathcal{O}(h^{1.8})$ order decaying rates can be observed which are better than our theoretical results.
Compared to the numerical results using a body-fitted mesh  in \cite{Chen2014},  we achieve the same accuracy by using an unfitted mesh with about
one sixth of the total mesh grid points.

\section{Conclusion}
In this paper, we propose a new gradient recovery technique based on the Nitsche's method. Compared to our previous works \cite{GuoYang2016,GuoYang2017,GuoYangZhang2017}, it avoids the loss of accuracy of gradient near the interface caused by skinny triangles.
By proving the supercloseness result for the Nitsche's method, we are able to show that the recovered gradient is superconvergent to the exact gradient. As a byproduct, we propose a curvature estimator based adaptive algorithm to generate
initial unfitted triangulations for the elliptic interface problems with complex geometry, which greatly reduces the computational cost as illustrated in Examples 5.3, 5.4 and 5.5.   The future  work is planned in several different directions:  firstly, we will extend the study into three dimension problems; secondly, we will consider other type equations like
elastic interface problems and wave propagation problems in heterogeneous media;  thirdly, we will combine the curvature estimator and the recovery-based
{\it a posterior} error estimator to derive adaptive algorithms for the complex interface problems.

\section*{Acknowledgement}
This work was partially supported by the NSF grant DMS-1418936 and DMS-1107291.

\bibliography{mybibfile}

\end{document}